\newtheorem{thm}{Theorem}[section]
\newtheorem{lem}[thm]{Lemma}
\newtheorem{prop}[thm]{Proposition}
\theoremstyle{definition}
\newtheorem{exam}[thm]{Example}
\newtheorem{defn}[thm]{Definition}
\numberwithin{equation}{section}
\numberwithin{figure}{section}
\newcommand{\inv}{^{-1}}
\newcommand{\del}{\partial}
\newcommand{\bbr}{{\mathbb{R}}}
\newcommand{\bbz}{{\mathbb{Z}}}
\newcommand{\bbq}{{\mathbb{Q}}}
\newcommand{\rank}{{\mathrm{rank}}}
\title{On the intersection ring of graph manifolds}
\author{Margaret I. Doig}
\address{Department of Mathematics\\Syracuse University\\215 Carnegie Building\\Syracuse, NY 13244-1150}
\email{midoig@syr.edu}
\urladdr{http://midoig.mysite.syr.edu/}
\author{Peter D. Horn$\dagger$}
\thanks{$\dagger$ Partially supported by National Science Foundation DMS-1258630}
\address{Department of Mathematics\\Syracuse University\\215 Carnegie Building\\Syracuse, NY 13244-1150}
\email{pdhorn@syr.edu}
\urladdr{http://pdhorn.expressions.syr.edu/}
\begin{document}

\maketitle

\begin{abstract}
	We calculate the intersection ring of three-dimensional graph manifolds with rational coefficients and give an algebraic characterization of these rings when the manifold's underlying graph is a tree. We are able to use this characterization to show that the intersection ring obstructs arbitrary three-manifolds from being homology cobordant to certain graph manifolds.
\end{abstract}

\section{Introduction}

The motivating problem for this paper is the homology cobordism classification of $3$-manifolds. Two closed, oriented $3$-dimensional manifolds $M$ and $N$ are \emph{homology cobordant} if there exists a $4$-dimensional manifold $W$ with $\del W = M \sqcup -N$ and the inclusion maps from $M$ and from $N$ into $W$ inducing isomorphisms on homology.  Homology cobordism is an equivalence relation, and $\Theta^3_\bbz$ is the group of homology cobordism classes of $3$-manifolds under the connected sum operation.  Livingston proved that every class $[M]\in\Theta^3_\bbz$ is represented by an irreducible $3$-manifold~\cite[Theorem 3.2]{Liv:HomCob}. Myers refined Livingston's result and proved that every class $[M]\in\Theta^3_\bbz$ is represented by a hyperbolic $3$-manifold~\cite{Myers}.

Since the $3$-manifolds with hyperbolic geometries form a rich class of examples, one might look for the simplest class of $3$-manifolds representing all homology cobordisms classes. Cochran and Tanner took up this question and proved, using the cohomology ring and Massey products -- two invariants of homology cobordism -- that there exist infinitely many distinct $3$-manifolds, none of which is homology cobordant to any Seifert fibered $3$-manifold \cite{CocTan}.  The class of \emph{graph manifolds} includes all Seifert fibered manifolds, and, generally, a graph manifold's cohomology ring and Massey products are less restrictive than those of a Seifert fibered space.

This paper arose out of our attempt to determine whether every $3$-manifold is homology cobordant to a graph manifold; this matter remains open.

Let $M$ be a graph manifold. In this paper, we describe an explicit set of curves in $M$ that generate $H_1(M;\bbq)$; we construct explicit surfaces in $M$ that generate $H_2(M;\bbq)$ and that are dual, in the sense of intersection, to the curves generating $H_1(M;\bbq)$; and we describe in generality the intersection product among these surfaces (cf. Theorem~\ref{thm:IntersectionRing}). This is equivalent to describing the cohomology ring with rational coefficients of an arbitrary graph manifold. Perhaps it is surprising that the cohomology rings of graph manifolds have not been classified, though Aaslepp, Drawe, Hayat-Legrand, Sczesny and Zieschang have computed the cochain complex and most of the cohomology ring for most graph manifolds, with coefficients modulo $2$ \cite{ADHLSZ}.

For those graph manifolds whose underlying graph is a tree (called tree graph manifolds), the cohomology ring is slightly simpler than in the general case.  Not only do we describe these rings in terms of curves and surfaces and their intersections, but we also characterize these rings using an algebraic construction called the connected sum of rings.  While even these rings are difficult to recognize, we use the cohomology ring invariant to obstruct homology cobordism to tree graph manifolds.

\newtheorem*{thm:result} {Theorem~\ref{thm:TreeGraphManifoldsNotGeneric}}
\begin{thm:result}
	Not every closed, compact $3$-manifold is homology cobordant to a tree graph manifold.
\end{thm:result}

There are several lines of inquiry to pursue in the future, including the extension of Theorem~\ref{thm:TreeGraphManifoldsNotGeneric} to the class of (general) graph manifolds.  The higher Massey products of graph manifolds might be a fruitful avenue to pursue; we considered the triple Massey product and were unable to conclude anything the cohomology ring would not already establish.

\subsection*{Organization} We review Seifert fibered spaces and graph manifolds, as well as set conventions in Section~\ref{section:definitions}.  In Sections~\ref{section:homology} and~\ref{section:cohomology}, we describe the homology groups and their ring structure, given by the intersection product.  We give an algebraic characterization of the intersection ring of tree graph manifolds in Section~\ref{section:TreeManifolds} and exhibit a $3$-manifold in Section~\ref{section:ExampleAwesome} that is not homology cobordant to a tree graph manifold.

\section{Definitions}\label{section:definitions}

We review the definition of a \emph{Seifert fibered space}, a space fibered by circles in an appropriate fashion, and then a \emph{graph manifold}, a manifold whose JSJ decomposition includes only Seifert fibered pieces. We will later define a special type of graph manifold, a \emph{tree graph manifold}.

\begin{defn} A \emph{Seifert fibered space} is a manifold fibered by $S^1$ where every fiber has a neighborhood which is covered by the solid torus $D^2\times S^1$ with fibration $*\times S^1$ so that the covering map respects the fibration.
\end{defn}

To construct a Seifert fibered space, consider $S$ be an orientable or nonorientable surface with boundary components $c_1,\ldots,c_q$ and $d_1,\ldots, d_p$.  Assume there is at least one boundary component for simplicity (we do want $S^1 \times S^2$ to be a Seifert fibered space, but we can arrange this in the next step by doing an appropriate Dehn filling).  Let $N$ be the unique circle bundle over $S$ with orientable total space, corresponding to the trivial homomorphism $\pi_1(S)\to \bbz_2$.  Let $t$ denote a fiber in $N$, and call $t$ a \emph{regular fiber}.  For each torus boundary component $t \times c_i$, choose a pair of relatively prime integers $a_i$ and $b_i$ so that $0 < a_i < |b_i|$ and  glue in a solid torus so that the meridional curve is identified with $b_i \, c_i + a_i\, t$.  We call this a \emph{$b_i/a_i$-framed Dehn filling}, and we call the core of the surgery solid torus a \emph{critical fiber}.  There are $p$ remaining boundary tori $d_i \times t$.  After these steps one has a Seifert fibered 3-manifold, $Y$.

As a matter of convention, we denote the genus of the nonorientable surface ${\mathop{\Large {\#}}}_{i=1}^g \bbr P^2$ by $-g$.

\begin{defn}
	An orientable, closed $3$-manifold $M$ is a \emph{graph manifold} if there is a collection $T$ of tori with product neighborhoods so that $M - N(T)$ is a collection of disjoint Seifert fibered manifolds.
\end{defn}

A graph manifold can be obtained by gluing oriented Seifert fibered spaces along their boundary tori by homeomorphisms.  We orient the Seifert fibered pieces and glue them by orientation-reversing homeomorphisms of their boundary tori, which orients the graph manifold. These homeomorphisms of the torus can be identified with the component of $GL_2(\bbz)$ of negative determinant.  Let $\begin{pmatrix}
	a&b\\c&d
\end{pmatrix}$ correspond to the homeomorphism of the torus taking an identified meridian $\mu$ and longitude $\lambda$ to $a\mu + c\lambda$ and $b\mu + d\lambda$, respectively.  Therefore any graph manifold can be represented by a decorated graph whose vertices are Seifert fibered spaces and whose edges show which Seifert fibered spaces are glued to which. Decorations must accompany the nodes (Seifert data) and edges (gluing homeomorphisms). To simplify calculations later, we make three assumptions:

	\begin{enumerate}
		\item Each gluing homeomorphism is either $J = \begin{pmatrix} 0& 1\\ 1&0 \end{pmatrix}$ or $-J$, and the edges are adorned with $\pm 1$ accordingly.
		\item No vertex has a self-loop (resolve a self loop by replacing the edge with an edge-vertex-edge-vertex-edge sequence, each vertex with trivial Seifert fibered space $T^2 \times I$).
		\item All graphs are connected (equivalently, all manifolds are prime).
	\end{enumerate}

To show the first assumption is not an assumption at all, we briefly describe how to reduce an arbitrary gluing homeomorphism to a plumbing homeomorphism. (This is mathematical folklore, but the authors are unable to find a reference in the literature.)  This essentially is the proof that graph manifolds are equivalent to plumbed manifolds, each of which were classified by Waldhausen~\cite{Waldhausen:GraphI},\cite{Waldhausen:GraphII} and Neumann~\cite{Neumann}, respectively.  We will calculate these manifolds' cohomology rings using their plumbing structures.

\begin{lem}\label{lem:torushomeo}
	Let $X = T^2 \times I$ and $U = 1 \times S^1 \times 1/2 \subset X$.  Then $X \cong X_{1/n}(U)$, where the latter is the result of $1/n$-framed Dehn surgery on $U$ as defined above.
\end{lem}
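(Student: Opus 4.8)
The plan is to exhibit $X$ as a Seifert fibered space in which $U$ is a regular fiber, and then to check that $1/n$-framed surgery leaves that fiber regular. Write $X = A \times S^1$ with $A = S^1 \times I$ the annulus; this is the (unique) circle bundle over $A$ with orientable total space, i.e.\ a Seifert fibration of $X$ over $A$ with no critical fibers, whose regular fibers are the circles $\{*\} \times S^1$. In this description $U = 1 \times S^1 \times 1/2$ is exactly the fiber over the point $q = (1,1/2) \in A$, and a fibered tubular neighborhood of $U$ is $N(U) = D \times S^1$ for a small disk $D \subset \mathrm{int}(A)$ around $q$. On $\partial N(U) = \partial D \times S^1$ the fiber is the curve $t = \{*\} \times S^1$ (the product, equivalently Seifert, framing of $U$), while the section $c = \partial D \times \{*\}$ is precisely the meridian $\mu$ of $N(U)$, since it bounds $D \times \{*\}$.

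First I would record that, read consistently with the $b/a$-framed Dehn fillings defined above, $1/n$-framed surgery on $U$ deletes $\mathrm{int}(N(U))$ and reglues a solid torus $V$ whose meridian is sent to $1 \cdot \mu + n \cdot t = c + n\,t$. The exterior $X \setminus \mathrm{int}(N(U)) = (A \setminus \mathrm{int}(D)) \times S^1$ is a circle bundle over the pair of pants $A \setminus \mathrm{int}(D)$, whose fibration meets $\partial N(U)$ in curves parallel to $t$. The key step is that this fibration extends over $V$ without a critical fiber: the algebraic intersection number on $\partial V$ of the reglued meridian with the fiber is $|(c + n\,t) \cdot t| = |c \cdot t| = 1$, so $\{c + n\,t,\ t\}$ is a basis of $H_1(\partial V)$ and $V$ may be identified with $D^2 \times S^1$ so that $\partial D^2 \times \{*\}$ is the reglued meridian $c + n\,t$ and $\{*\} \times S^1$ is $t$. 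Gluing $V$ with its product fibration then makes $X_{1/n}(U)$ a circle bundle over $(A \setminus \mathrm{int}(D)) \cup_{\partial D} D = A$ with no critical fibers; since $X_{1/n}(U)$ is orientable (it is surgery on an orientable manifold), it is the unique circle bundle over $A$ with orientable total space, namely $A \times S^1 = T^2 \times I$, giving $X \cong X_{1/n}(U)$.

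The only real content is the intersection computation $|(c + n\,t) \cdot t| = 1$, which says the reglued fiber stays regular; the rest is bookkeeping with Seifert data. The point to be careful about is the meaning of ``$1/n$-framed'': I am reading it, in line with the $b/a$-framed fillings above, as sending the new meridian to $1 \cdot \mu + n \cdot t$, where $\mu$ is the meridian of $N(U)$ and $t$ the product framing; with the reciprocal convention one is instead performing integral $n$-surgery, which creates a critical fiber of multiplicity $n$ and does \emph{not} recover $T^2 \times I$, so this reading is the intended one. If a more hands-on argument is wanted, the same move is an annulus (Rolfsen) twist along the vertical annulus $1 \times S^1 \times I$ through $U$: one builds a self-homeomorphism of $(A \setminus \mathrm{int}(D)) \times S^1$ that is the identity on one boundary torus, an $n$-fold fiber twist on a second, and a $(-n)$-fold fiber twist on the third, and checks it converts the $1/n$-surgery regluing into the trivial one; but the Seifert-fibered argument above is cleaner and is the route I would write up.
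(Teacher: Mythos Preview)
Your argument is correct, and it takes a genuinely different route from the paper's. The paper embeds $X$ into $S^3$ carrying $U$ to the unknot, then invokes the standard Rolfsen twist argument that $1/n$-surgery on the unknot returns $S^3$; since the twist is supported in a neighborhood of an annulus that lies inside $X$, the homeomorphism restricts to $X$ and reduces $X_{1/n}(U)$ step by step to $X_{1/0}(U)=X$. Your approach instead stays intrinsic to $X$: you view $X=A\times S^1$ as a Seifert fibration with $U$ a regular fiber, observe that the reglued meridian $c+nt$ meets the fiber class once, and conclude the fibration extends over the surgery solid torus with a regular core, so $X_{1/n}(U)$ is again the (unique orientable) circle bundle over the annulus. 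Your method avoids the ambient $S^3$ and meshes naturally with the Seifert language used elsewhere in the paper (indeed the next lemma is phrased in terms of adding a critical fiber of type $1/n$); the paper's method has the advantage of being the literal ``Rolfsen twist'' that most readers will recognize, and it makes the iterative reduction $1/n \to 1/(n-1)\to\cdots$ explicit, which is what Lemma~\ref{lem:ColOp} actually uses. Your closing remark about the annulus twist is essentially the paper's argument transplanted from $S^3$ back into $X$.
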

\begin{proof}
	The standard proof that $1/n$-framed Dehn surgery on the unknot in $S^3$ is homeomorphic to $S^3$ relies on doing a Dehn twist repeatedly on the complement of the unknot to reduce the surgery coefficient to $1/0$.  Embed $X$ into $S^3$ taking $U$ to the unknot, and $X$ is invariant under this Dehn twist.  Thus, $X_{1/n}(U) \cong X_{1/n-1}(U)\cong \cdots\cong X_{1/0}(U) \cong X$.
\end{proof}

\begin{lem}\label{lem:ColOp}
	Let $Y$ denote the gluing of Seifert fibered spaces $M$ and $N$ by a homeomorphism $\begin{pmatrix}
		a&b\\c&d
	\end{pmatrix}$.  Then $Y$ is homeomorphic to the gluing of Seifert fibered spaces $M$ and $N'$ by a homeomorphism 	$\begin{pmatrix}
			a-nb & b \\ c-nd & d
		\end{pmatrix}$, where $N'$ is obtained from $N$ by adding one critical fiber of type $1/n$.
\end{lem}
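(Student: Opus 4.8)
The plan is to read ``adding a critical fiber of type $1/n$ to $N$'' literally as a Dehn surgery on a regular fiber of $N$ lying in a collar of the boundary torus being glued, and then to apply Lemma~\ref{lem:torushomeo} to that surgery. Let $T \subset \partial N$ be the torus along which $M$ and $N$ are identified in $Y$, and let $V \subset N$ be a fibered collar of $T$, so that the pair $(V,U)$ with $U$ a regular fiber of $N$ sitting in $V$ is homeomorphic to the pair $(T^2 \times I,\; 1\times S^1 \times 1/2)$ appearing in Lemma~\ref{lem:torushomeo}. Unwinding the construction of Seifert fibered spaces from Section~\ref{section:definitions}, the manifold $N'$ obtained from $N$ by introducing one more critical fiber of type $1/n$ is precisely
\[
    N' \;\cong\; \bigl(N \setminus \mathring V\bigr)\; \cup_{\mathrm{id}}\; V_{1/n}(U),
\]
with the new critical fiber being the surgery core; in particular $N'$ is again Seifert fibered with exactly one more critical fiber than $N$.

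Next I would apply Lemma~\ref{lem:torushomeo} to get a homeomorphism $h\colon V_{1/n}(U) \to V = T^2\times I$. The Rolfsen-twist argument proving that lemma produces an $h$ that is the identity on one boundary torus of the collar and equals $\tau^{\pm n}$ on the other, where $\tau$ is the Dehn twist of $T$ along a parallel copy of $U$, i.e.\ along the regular fiber. Choosing $h$ to restrict to the identity on the component of $\partial V$ glued to $N\setminus\mathring V$, we obtain a homeomorphism $\psi\colon N'\to N$ that is the identity off $T$ and restricts to $\tau^{\pm n}$ on $T$. Gluing $M$ to $N'$ along $T$ by some homeomorphism $\phi'$ and then applying $\mathrm{id}_M\cup\psi$ exhibits $M\cup_{\phi'}N'$ as $M\cup_{\tilde\phi}N$, where $\tilde\phi$ is $\phi'$ altered by $\tau^{\pm n}$ on the $N$-side. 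Hence, starting from the original gluing $Y = M\cup_\phi N$ and choosing $\phi'$ to be $\phi$ pre/post-composed with $\tau^{\mp n}$ so as to cancel the twist, we get $M\cup_{\phi'}N' \cong Y$.

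It then remains to compute how $\phi'$ differs from $\phi$ at the level of $H_1(T)$. Because $U$ represents the regular fiber, which in the $\mu,\lambda$ conventions of Section~\ref{section:definitions} is one of the two standard generators of $H_1(T)$, the twist $\tau$ acts on $H_1(T)$ by a transvection of the form $\left(\begin{smallmatrix}1&0\\\pm1&1\end{smallmatrix}\right)$; composing $\phi$ on the appropriate side with $\tau^{\mp n}$ then replaces its matrix by
\[
    \begin{pmatrix} a & b\\ c & d\end{pmatrix}\begin{pmatrix} 1 & 0\\ -n & 1\end{pmatrix} \;=\; \begin{pmatrix} a-nb & b\\ c-nd & d\end{pmatrix},
\]
which is the assertion.

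The only real work is bookkeeping of conventions: one must pin down which standard curve of $T$ is the regular fiber, the sense of the Rolfsen twist in Lemma~\ref{lem:torushomeo}, and the side on which gluing homeomorphisms compose relative to the matrix convention of Section~\ref{section:definitions}, so that the sign of $n$ and the fact that the column operation acts as $\mathrm{C}_1 \mapsto \mathrm{C}_1 - n\,\mathrm{C}_2$ (rather than some other transvection) come out exactly as stated. This is the step most likely to cause trouble, but it is entirely mechanical once the conventions are held fixed.
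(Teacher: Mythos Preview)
Your argument is correct and is essentially the same approach as the paper's own proof, just carried out in much greater detail: the paper invokes Lemma~\ref{lem:torushomeo} to identify $N'$ with $N$ and then simply asserts that ``the composition of Dehn twists acts on the gluing homeomorphism as claimed,'' whereas you explicitly track the Rolfsen twist on the collar, its restriction to $\partial N$, and the resulting transvection on $H_1(T)$. Your acknowledgment that the only delicate point is the sign/side bookkeeping is exactly right and matches what the paper suppresses.
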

\begin{proof}
	By Lemma~\ref{lem:torushomeo}, the described $N'$ is homeomorphic to $N$, and it is well-known that the composition of Dehn twists acts on the gluing homeomorphism as claimed.
\end{proof}

Lemma~\ref{lem:ColOp} describes how to reduce a gluing homeomorphism by a column operation on a particular matrix. The column operation from left to right may be achieved by altering $M$ instead of $N$. If a nontrivial gluing map occurs in a graph manifold, then applying this reduction repeatedly alters the matrix to
\[ \begin{pmatrix}
	0 & \pm\mathrm{gcd}(a,b)\\
	\pm \mathrm{gcd(c,d)} & 0
\end{pmatrix} \] which is  $\pm J$ since $\det \begin{pmatrix}
		a&b\\ c&d
	\end{pmatrix} = -1$.

\section{Homology of graph manifolds}\label{section:homology}

We schematically represent a graph manifold $M$ by a graph as in Figure~\ref{fig:graphmanifold}.

\begin{figure}[!ht]
	\begin{center}
		\begin{overpic}{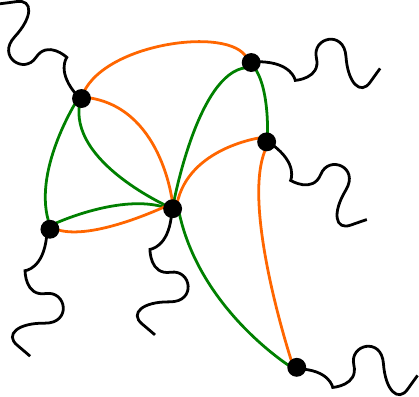}
			\put(100,50){\textcolor{BurntOrange}{$T$: maximal tree}}
			\put(100,40){\textcolor{ForestGreen}{$E$: edges not in maximal tree}}
		\end{overpic}
		\caption{Our schematic graph $G$ of the graph manifold $M$}
		\label{fig:graphmanifold}
	\end{center}
\end{figure}

Each node with its attached squiggle in Figure~\ref{fig:graphmanifold} represents a Seifert fibered piece of $M$.  We choose to think of the squiggle as the essential portion of the Seifert fibered piece, which contains all the genus of the base and all of the critical fibers, and we think of each node in Figure~\ref{fig:graphmanifold} as a punctured $S^2$ cross $S^1$.  With this scheme, all of the plumbings between the Seifert fibered components of $M$ happen between $P\times S^1$'s, where $P$ is a punctured $S^2$.  The squiggles are attached to their nodes by identifying longitude to longitude and meridian to meridian.

There is some back-and-forth between graphs and manifolds.  By \emph{node}, we mean a vertex in the graph $G$. By \emph{end}, we mean a Seifert fibered submanifold (depicted by a squiggle in Figure~\ref{fig:graphmanifold}) of the manifold $M$.  To clarify things, when necessary, for a node (or subgraph) $N$, we will write $gN$ to denote the associated vertex (or subgraph) of $G$, and we will write $mN$ to denote the associated submanifold of $M$.  Under this convention, $mG = M$.

Unless stated, all homology and cohomology in this section is taken with rational coefficients.

\subsection{The plumbing graph} 

Let $N$ denote a node of $G$, so $mN \cong P \times S^1$ where $P$ is a $p$-times-punctured $S^2$.  Then $H_1(mN)$ is free of rank $p$ with generators $t, d_1,\ldots, d_p$ and relation $\sum d_i = 0$.  Here $t$ is carried by the fiber $S^1$, and the $d_i$ are carried by the boundary components of $P$.  We enumerate the $d_i$ so that the last, $d_p$, is the boundary component which is glued by the identity to the Seifert fibered piece (`squiggle') attached to the node $N$.

Ignore the squiggle ends in $G$. Let $T$ be a maximal tree for the remaining graph and $E$ be the remaining edges not in $T$. We aim to calculate $H_1(M)$ by
\begin{enumerate}
	\item calculating $H_1(mT)$,
	\item calculating $H_1(mT\cup mE)$, and
	\item calculating $H_1(mG) = H_1(M)$
\end{enumerate}
by a sequence of Mayer-Vietoris calculations.

\begin{lem}
	Let $N_1$ and $N_2$ be two nodes joined in $G$ by an edge in $T$ by $\pm J$.  Then $H_1(mN_1\cup mN_2)$ is free of rank $\beta_1(mN_1)+\beta_1(mN_2)-2$.  It is generated by $t^1, d^1_k$ and $t^2, d^2_k$ (the regular fiber and \emph{all} boundary components of $N_1$ and $N_2$) with relations:
	\begin{enumerate}
		\item $d^i_{p_i} = - \sum_{j=1}^{p_i-1} d^i_j$, for $i=1,2$
		\item $d^1 = \pm t^2$
		\item $d^2 = \pm t^1$
	\end{enumerate}
\end{lem}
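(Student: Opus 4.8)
The plan is to compute $H_1(mN_1 \cup mN_2)$ via the Mayer--Vietoris sequence, taking the two pieces to be (thickenings of) $mN_1$ and $mN_2$, whose intersection is the gluing torus $A \cong T^2$ along which they are plumbed. First I would record the inputs: by the discussion of the plumbing graph, $H_1(mN_i)$ is free of rank $p_i$ on generators $t^i, d^i_1, \ldots, d^i_{p_i}$ subject to the single relation $\sum_{j=1}^{p_i} d^i_j = 0$, so $\beta_1(mN_i) = p_i$; and $H_1(A) \cong \bbz^2$ (rationally $\bbq^2$) generated by a meridian and a longitude of the boundary component of $mN_1$ being glued, which I will take to be $d^1_1$ and $t^1$ (the boundary curve and the fiber direction). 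Under the gluing homeomorphism $\pm J$, which swaps meridian and longitude, the image of this torus in $mN_2$ is spanned by the fiber $t^2$ and the boundary curve $d^2_1$ (again after reindexing so the glued boundary components are the first ones on each side). The only subtlety in identifying $H_1(A)$ inside each piece is a sign: $\pm J$ sends $\mu \mapsto \pm\lambda$ and $\lambda \mapsto \pm\mu$, which is exactly what produces the $\pm$ in relations (2) and (3).

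Next I would write out the relevant segment
\[
H_1(A) \xrightarrow{\ (\iota_1, -\iota_2)\ } H_1(mN_1) \oplus H_1(mN_2) \longrightarrow H_1(mN_1 \cup mN_2) \longrightarrow H_0(A) \longrightarrow H_0(mN_1) \oplus H_0(mN_2).
\]
Since $A$, $mN_1$, $mN_2$ are all connected, the last map $H_0(A) \to H_0(mN_1)\oplus H_0(mN_2)$ is injective (diagonal-type), so $H_1(mN_1\cup mN_2)$ is exactly the cokernel of the map $H_1(A) \to H_1(mN_1)\oplus H_1(mN_2)$. That cokernel is the quotient of the free module on $t^1, d^1_1,\dots,d^1_{p_1}, t^2, d^2_1,\dots,d^2_{p_2}$ by the relations $\sum_j d^1_j = 0$, $\sum_j d^2_j = 0$ coming from the two pieces, together with the two relations $d^1_1 \mp t^2 = 0$ and $t^1 \mp d^2_1 = 0$ coming from the two generators of $H_1(A)$ (the boundary/meridian generator and the fiber/longitude generator, mapped into each side). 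After relabeling $d^1_1$ as $d^1$ and $d^2_1$ as $d^2$ — i.e. ``the'' boundary components being glued — these are precisely relations (1)--(3) in the statement. A rank count then finishes rational freeness: we have $p_1 + p_2 + 2$ generators and $4$ independent relations (the two sums are independent of the two gluing relations, and over $\bbq$ all four are independent provided $p_1, p_2 \geq 1$, which holds since each node has at least the squiggle boundary), giving rank $p_1 + p_2 - 2 = \beta_1(mN_1) + \beta_1(mN_2) - 2$; torsion-freeness over $\bbq$ is automatic.

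The main obstacle I anticipate is bookkeeping rather than conceptual: one must be careful about (a) exactly which curves generate $H_1(A)$ and how they sit inside $H_1(mN_1)$ and $H_1(mN_2)$ — in particular that the fiber $t^i$ and the chosen boundary curve $d^i_1$ really do form a basis of the torus homology, which uses the product structure $mN_i \cong P_i \times S^1$ — and (b) the sign, i.e. checking that the $J$ versus $-J$ distinction is faithfully recorded by the $\pm$ in relations (2)--(3) and does not interact with the orientation conventions in a way that changes the isomorphism type. I would also double-check that the four relations are genuinely independent over $\bbq$ (e.g. the gluing relations each involve a generator, $t^2$ and $t^1$ respectively, that does not appear in either sum relation), so that the cokernel has the claimed rank and no unexpected collapse occurs.
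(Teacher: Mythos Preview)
Your proposal is correct and follows exactly the approach the paper has in mind: the paper explicitly sets up this computation as one in ``a sequence of Mayer--Vietoris calculations'' and then simply declares the proof ``immediate,'' so you have supplied precisely the omitted details. The only cosmetic discrepancy is indexing---the paper reserves $d^i_{p_i}$ for the squiggle boundary rather than $d^i_1$ for the glued boundary---but this does not affect the argument.
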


The proof is immediate, and it generalizes to

\begin{lem}\label{lem:homologytree}
	Let $T$ denote a maximal tree in $G$ with nodes $N_1, N_2, \cdots, N_n$.  Then $H_1(mT)$ is free of rank $n+2\#\, E$ with generators $t^i, d^i_k$ (again, \emph{all} boundary components) and relations:
	\begin{enumerate}
		\item $d^i_{p_i} = -\sum_{j=1}^{p_i-1} d^i_j$ for each $i$
		\item $d^i = \pm t^j$ and $d^j = \pm t^i$ whenever nodes $i$ and $j$ are joined by an edge decorated by $\pm 1$ in $T$
	\end{enumerate}
	The $t^i$ and $d^i_k$, for those $d^i_k$ that are not involved in any plumbings in the tree $T$ nor in plumbing $T$ to its Seifert fibered pieces, are free generators of $H_1(mT)$.
\end{lem}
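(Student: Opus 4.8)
The plan is to induct on the number of nodes $n$, enlarging $mT$ one leaf of the tree at a time, just as in the two-node case above. Relabel the nodes so that for each $k<n$ the node $N_{k+1}$ is joined in $T$ to the subtree $T_k$ on $N_1,\dots,N_k$ by a single edge; this is possible because $T$ is a tree. Thicken the gluing torus of that edge so that $mT_k$ and $mN_{k+1}\cong P_{k+1}\times S^1$ are open subsets of $mT_{k+1}$ whose intersection is a product neighborhood $\Sigma\cong T^2$.

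First I would record the homology of the pieces and set up the Mayer--Vietoris step. Here $H_1(mN_i)=H_1(P_i\times S^1)$ is free of rank $p_i$ on $t^i,d^i_1,\dots,d^i_{p_i}$ with the single relation $d^i_{p_i}=-\sum_{j<p_i}d^i_j$ (relation (1)), and $H_1(\Sigma)\cong\bbq^2$ with the meridian and longitude of the gluing torus as basis. The reduced Mayer--Vietoris sequence for $mT_{k+1}=mT_k\cup mN_{k+1}$ reads
\[ H_1(\Sigma)\xrightarrow{\ f\ } H_1(mT_k)\oplus H_1(mN_{k+1})\longrightarrow H_1(mT_{k+1})\longrightarrow \widetilde H_0(\Sigma)=0, \]
so $H_1(mT_{k+1})=\operatorname{coker}(f)$. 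Viewed inside $mN_{k+1}$ the gluing torus is $\partial_a P_{k+1}\times S^1$, which is incompressible, so $H_1(\Sigma)\to H_1(mN_{k+1})$ is injective; hence $f$ is injective. Because the edge is decorated by $\pm1$ — i.e.\ the gluing is by $\pm J$, interchanging meridian and longitude — the image of $f$ is spanned by exactly the two elements $d^{k+1}_a\mp t^i$ and $d^i_{a'}\mp t^{k+1}$ associated to the new edge, which are the pair of relations (2) for that edge. Iterating $k=1,\dots,n-1$ produces the presentation claimed in the statement.

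For the rank, injectivity of $f$ at every stage gives $\dim H_1(mT_{k+1})=\dim H_1(mT_k)+p_{k+1}-2$, hence $\dim H_1(mT)=\sum_{i=1}^n p_i-2(n-1)$. Each node $N_i$ has exactly one boundary torus glued to its own squiggle (the $d^i_{p_i}$ one) and the remaining $p_i-1$ glued to other nodes along the edges of $G$ (those in $T$ together with those in $E$), of which there are $(n-1)+\#\,E$; so $\sum_i(p_i-1)=2\big((n-1)+\#\,E\big)$ and therefore $\dim H_1(mT)=n+2\#\,E$. Finally, to pin down the free generators I would apply Tietze moves to the presentation: relation (1) eliminates each $d^i_{p_i}$, and each relation (2) eliminates a tree-plumbing boundary class $d^i_a$ in terms of $\pm t^j$; what remains are the $t^i$ together with precisely those $d^i_k$ that are neither a squiggle boundary nor used by any edge of $T$ — the $E$-plumbing boundaries — and these form a basis, $n+2\#\,E$ of them.

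There is no deep obstacle here; the work is in the bookkeeping. The two points to get right are the injectivity of each Mayer--Vietoris map $f$ (this is what forces the two relations per tree edge to be independent and drops the rank by exactly $2$ at each step) and the boundary-torus count $\sum_i(p_i-1)=2\big((n-1)+\#\,E\big)$. It is worth stressing that although the edges of $E$ are never glued in $mT$, they still consume boundary tori of the nodes, and this is exactly why the corresponding classes $d^i_k$ survive as free generators of $H_1(mT)$.
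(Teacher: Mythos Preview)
Your argument is correct and is exactly the approach the paper has in mind: the paper proves the two-node case by Mayer--Vietoris and then simply declares that this ``generalizes'' to the $n$-node tree, leaving the induction implicit. Your write-up supplies that induction, together with the rank count $\sum_i(p_i-1)=2\bigl((n-1)+\#\,E\bigr)$ and the Tietze elimination identifying the surviving free generators, none of which the paper spells out; there is nothing to correct.
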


Note that we have chosen orientations on the regular fibers of each node $mN_i$, so we must take care in the plumbings associated to the edges.  Each plumbing has an associated sign $\pm$ according to whether the plumbing identifies a positively oriented regular fiber with a $\pm$-oriented boundary component from the other node.

It is straightforward that each plumbing from an edge in $E$ reduces the rank of $H_1(mG)$ by $1$: the plumbing  equates two boundary component generators from Lemma~\ref{lem:homologytree} with $\pm$ regular fibers, and it creates a new generator corresponding to the loop in $gG$ arising from adding an edge to the maximal tree.

\begin{lem}\label{lem:homgraph}
	Let $b = \beta_1(G) = \#\, E$.  Then $H_1(mT\cup mE)$ is free of rank $b + \#\,\mathrm{nodes}$ with generators consisting of regular fibers of the nodes, boundary components in the nodes, and loops corresponding to the edges in $E$.  Each $d^i_{p_i} = -\sum_{j=1}^{p_i -1} \pm t^{k(j)}$, where $d^i_j$ is plumbed by $\pm J$ to node $k(j)$.
\end{lem}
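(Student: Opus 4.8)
The plan is to obtain $H_1(mT\cup mE)$ from the presentation of $H_1(mT)$ given by Lemma~\ref{lem:homologytree}, attaching the non-tree edges of $E$ one at a time and running a Mayer--Vietoris sequence at each stage (equivalently, one could glue all of the node pieces along all of the edge tori in a single Mayer--Vietoris computation). Because every group in sight is a $\bbq$-vector space, ``free of rank $r$'' means nothing more than ``of dimension $r$'', so only the dimension count and the explicit generators and relations require argument. As indicated just before the statement, attaching a non-tree edge equates two boundary-component generators with $\pm$ regular fibers and introduces a single new loop class, so it lowers the rank by one; iterating over all $\#E$ of them takes the rank $n+2\#E$ of $H_1(mT)$ down to $n+\#E=b+\#\,\mathrm{nodes}$.

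For the inductive step, let $X$ be $mT$ plumbed along some subset of the non-tree edges, and attach one more. By assumption~(2) there are no self-loops, so this edge glues a boundary torus $T_1\subset mN_i$ to a boundary torus $T_2\subset mN_j$ with $i\neq j$, via $\pm J$; call the result $Y$. Split $Y$ as $X$ together with a bicollar $T^2\times I$ of the new gluing torus, so the overlap is $T_1\sqcup T_2\simeq T^2\sqcup T^2$. Since $X$, $T_1$, and $T_2$ are connected, the map $H_0(T_1\sqcup T_2)\to H_0(X)\oplus H_0(T^2)$ has one-dimensional kernel, and exactness sends this into $H_1(Y)$ as one new class --- the loop in $gG$ obtained by adding the edge. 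Reading off the remainder of the sequence, $H_1(Y)$ lies in a short exact sequence
\[
0\longrightarrow \operatorname{coker}\psi\longrightarrow H_1(Y)\longrightarrow\bbq\longrightarrow 0,
\qquad
\psi\colon H_1(T_1)\oplus H_1(T_2)\longrightarrow H_1(X)\oplus H_1(T^2),
\]
so $\dim H_1(Y)=\dim H_1(X)+3-\rank\psi$, and the required drop by one is precisely the assertion that $\psi$ is injective.

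Injectivity of $\psi$ is the heart of the matter. Unwinding the Mayer--Vietoris maps, a nonzero element of $\ker\psi$ produces a relation in $H_1(X)$ of the form $x\,d^j_b+y\,t^j=\pm x\,t^i\pm y\,d^i_a$, where $d^i_a$ and $d^j_b$ are the boundary circles identified by the new edge and $t^i$, $t^j$ the corresponding regular fibers (the signs dictated by the $\pm J$ gluing, exactly as in Lemma~\ref{lem:homologytree}). But in the presentation of $H_1(X)$ maintained through the induction the fibers $t^i,t^j$ are free generators, and they are distinct since $i\neq j$, while $d^i_a$ and $d^j_b$ are boundary components not yet involved in any plumbing --- they are the very ones this edge plumbs, and neither equals a squiggle curve $d^k_{p_k}$ --- so they too are free generators; hence $x=y=0$ and $\psi$ is injective. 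This both confirms the rank formula and shows that the new edge trades the generators $d^i_a,d^j_b$ for $\pm t^j,\pm t^i$ together with the one new loop. Running the induction to the end leaves as generators the fibers $t^i$, the $\#E$ loops, and the squiggle curves $d^i_{p_i}$; substituting each plumbing relation $d^i_j=\pm t^{k(j)}$ into the within-node relation $d^i_{p_i}=-\sum_{j=1}^{p_i-1}d^i_j$ gives $d^i_{p_i}=-\sum_{j=1}^{p_i-1}\pm t^{k(j)}$, as stated. The only subtle point is the bookkeeping inside the injectivity step --- checking that at each stage the freshly plumbed boundary curves are genuinely still unused free generators of the current $H_1$; the rest is routine Mayer--Vietoris.
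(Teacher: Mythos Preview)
Your proof is correct and follows the same approach the paper sketches in the paragraph immediately preceding the lemma: attach the edges of $E$ one at a time, observing that each plumbing trades two boundary-component generators for relations with regular fibers and introduces one new loop class, so the rank drops by exactly one per edge. The paper treats this as ``straightforward'' and gives no formal argument, whereas you have carefully filled in the Mayer--Vietoris computation and isolated the key point (injectivity of $\psi$, equivalently that the four classes $d^i_a, t^i, d^j_b, t^j$ remain independent in $H_1(X)$ at each stage).
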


\subsection{The squiggle ends, or Seifert fibered pieces}

To calculate $H_1(M)$, it remains to attach the Seifert fibered ends to $T\cup E$.  Note that each vertex in $T\cup E$ gives one boundary component $T^2$, and so each Seifert piece we attach has one boundary component.

Let $N$ denote a Seifert fibered space with base a surface $S$ of genus $g$ with $q$ exceptional fibers of framing $b_1/a_1,\ldots, b_q/a_q$, and one boundary component.  Our convention is that the meridinal disk of the $i$th exceptional torus has boundary $b_i[\mu_i] + a_i[\lambda_i] \in \pi_1(T^2_i)$, and $0 < a_j < |b_j|$.  Here each $\lambda_i$ is identified with a regular fiber.

If $S$ is orientable of genus $g$ with one boundary curve $d$, recall that
\[\pi_1(N) \cong \langle \alpha_i, \beta_i, c_j, d, t:\ [\alpha_i,t], [\beta_i,t], [c_j,t], [d_k,t], c_j^{b_j}t^{a_j}, [\alpha_1,\beta_1]\cdots[\alpha_g,\beta_g] c_1 \cdots c_q d \rangle.\]
If $S$ is non-orientable of genus $g$,
\[\pi_1(N)\cong \langle \delta_i, c_j, d, t:\ \delta_i t \delta_i^{-1} t, [c_j,t], [d,t], c_j^{b_j}t^{a_j}, \delta_1^2\cdots \delta_g^2 c_1\cdots c_q d \rangle.\] 
In either case, $t$ is carried by a regular fiber (see, e.g.,~\cite[p. 117]{Hempel}).

\begin{lem}\label{lem:n-o-torsion}
If $S$ is nonorientable, then $H_1(N;\bbz)$ contains an element of order $2$ carried by a regular fiber, and the boundary component $d$ of $S$ has infinite order. $H_1(N;\bbq) \cong \bbq^g$ with basis $\delta_i$ where $d = -2\sum \delta_i$.
\end{lem}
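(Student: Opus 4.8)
The plan is to compute $H_1(N;\bbz)$ directly from the presentation of $\pi_1(N)$ given just above, by abelianizing and then reading off the relations as an integer matrix. Abelianizing the nonorientable presentation $\langle \delta_i, c_j, d, t:\ \delta_i t \delta_i^{-1} t, [c_j,t], [d,t], c_j^{b_j}t^{a_j}, \delta_1^2\cdots \delta_g^2 c_1\cdots c_q d \rangle$, the commutator relations die, the relation $\delta_i t \delta_i^{-1} t$ becomes $2t = 0$, the relations $c_j^{b_j}t^{a_j}$ become $b_j c_j + a_j t = 0$, and the long relation becomes $2\sum_i \delta_i + \sum_j c_j + d = 0$. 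So $H_1(N;\bbz)$ is the abelian group on generators $\delta_1,\dots,\delta_g, c_1,\dots,c_q, d, t$ modulo exactly these relations.

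First I would observe that $t$ has order dividing $2$ from the relation $2t = 0$, and that $t$ is carried by a regular fiber by the cited structure of $\pi_1(N)$; I then need to check $t$ has order exactly $2$, i.e. $t \neq 0$ in $H_1(N;\bbz)$. This follows because the only other relation involving $t$ is $b_j c_j + a_j t = 0$: since $0 < a_j < |b_j|$ and $\gcd(a_j,b_j)=1$, we have $b_j$ odd whenever $a_j$ is odd, so reducing mod the subgroup generated by $c_j$'s and $d$ cannot force $t=0$. Concretely, the homomorphism $H_1(N;\bbz)\to\bbz/2$ sending $t\mapsto 1$, $\delta_i\mapsto 0$, $d\mapsto 0$, and $c_j \mapsto 0$ if $b_j$ is odd (equivalently $a_j$ even) respectively $c_j\mapsto a_j \bmod 2$ in general, is well-defined on all relations (check: $2t\mapsto 0$; $b_j c_j + a_j t \mapsto b_j a_j + a_j = a_j(b_j+1)\equiv 0$ since $b_j$ is odd when $a_j$ is odd; the long relation maps to $\sum a_j \bmod 2$, which one absorbs by instead sending $d \mapsto \sum_j a_j \bmod 2$). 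This exhibits $t$ as an element of order exactly $2$.

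Next I would show $d$ has infinite order: use the retraction-type homomorphism $H_1(N;\bbz)\to\bbz$ that kills $t$ (hence kills all $c_j$, since $b_j c_j = -a_j t = 0$ and $b_j\neq 0$ forces $c_j$ torsion — wait, more cleanly: after inverting $2$, or working in $H_1(N;\bbq)$, the $c_j$ and $t$ vanish), and sends $d \mapsto -2$, $\delta_i \mapsto 1$; this is consistent with the long relation $2\sum\delta_i + \sum c_j + d = 0$, which maps to $2g + 0 - 2g = 0$ only if $g$... so instead send each $\delta_i\mapsto 1$ and $d\mapsto -2g$, or simply pass to $H_1(N;\bbq)$ first. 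For the rational statement, tensoring with $\bbq$ kills $t$ (order $2$), and then $b_j c_j = -a_j t = 0$ with $b_j\neq 0$ forces $c_j = 0$; the long relation becomes $d = -2\sum_i \delta_i$, leaving $H_1(N;\bbq)\cong\bbq^g$ freely generated by $\delta_1,\dots,\delta_g$. Infinite order of $d$ over $\bbz$ then follows since its image $-2\sum\delta_i$ in $H_1(N;\bbq)\cong\bbq^g$ is nonzero (as $g\geq 1$ when $S$ is nonorientable).

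The only genuinely delicate point is the order-exactly-$2$ claim for $t$: one must produce an explicit homomorphism to $\bbz/2$ surviving on all the exceptional-fiber relations, and this is where the hypothesis $0<a_j<|b_j|$, $\gcd(a_j,b_j)=1$ (forcing $a_j,b_j$ not both even) is used. Everything else is routine linear algebra over $\bbz$ and $\bbq$.
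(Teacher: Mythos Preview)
Your argument that $t$ has order exactly $2$ contains a genuine gap. You assert that ``$b_j$ is odd whenever $a_j$ is odd'' as a consequence of $\gcd(a_j,b_j)=1$, but this is false: $a_j=1$, $b_j=2$ is allowed. With $b_j$ even and $a_j$ odd, the relation $b_j c_j + a_j t = 0$ forces $b_j\,\phi(c_j) + a_j \equiv a_j \equiv 1 \pmod 2$ for \emph{any} choice of $\phi(c_j)\in\bbz/2$, so your homomorphism to $\bbz/2$ cannot be made well-defined. Worse, no such homomorphism exists at all in this situation: for example with $g=1$, $q=1$, $(a_1,b_1)=(1,2)$, one eliminates $d$ and $t$ to find $H_1(N;\bbz)\cong\bbz\langle\delta_1\rangle\oplus(\bbz/4)\langle c_1\rangle$ with $t = -2c_1$. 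Here $t$ does have order $2$, but it is divisible by $2$, so every homomorphism to $\bbz/2$ kills it. Thus the strategy of exhibiting an explicit splitting to $\bbz/2$ is not salvageable.

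The paper's proof proceeds instead by contradiction via rank counts: first it shows $H_1(N;\bbz)$ must contain $2$-torsion by comparing the $\bbq$-rank to the $\bbz/2$-rank of the presentation, and then it argues separately that $t=0$ would force the rank to exceed $g$. Your treatment of the rational computation and the infinite order of $d$ (by passing to $H_1(N;\bbq)$ and noting $d=-2\sum\delta_i\neq 0$) is correct and essentially what the paper does.
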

\begin{proof}
	Note that $H_1(N;\bbz)$ has the $\bbz$-module presentation with generators $\delta_i, c_j, d,$ and $t$ and relations
	\begin{eqnarray*}
		2\,t &=& 0, \\
		b_j\,c_j &=& -a_j\,t,\\
		2\sum \delta_i + \sum c_j + d &=& 0.		
	\end{eqnarray*}
There are a total of $1+q+g+1$ generators.  Tensoring with $\bbq$ shows that the rank of $H_1(N;\bbz)$ is $g$ and that $d$ has infinite order in $H_1(N;\bbz)$.
	
	Suppose for the sake of contradiction that $H_1(N;\bbz)$ contains no $2$-torsion.  Then $H_1(N;\bbz) \cong \bbz^{g} \oplus (\mathrm{odd\ torsion})$, so $H_1(N;\bbz)\otimes \bbz_2$ has the same rank as $H_1(N;\bbz)$.  However, reducing the relations modulo $2$ yields
	\begin{eqnarray*}
		b_j\, c_j &=& a_j\,t,\\
		\sum c_j + d &=& 0,
	\end{eqnarray*}
and so $H_1(N;\bbz)\otimes \bbz_2$ has a presentation with $1+q+g+1$ generators and at most $q+1$ relations.  Thus $\rank(H_1(N)\otimes \bbz_2) \geq (1+q+g+1) - (q+1) = g+1$, which is a contradiction.
	
	Suppose $t$ does not have order $2$.  Then $2\,t=0$ implies $t=0$, which eliminates a generator and a relation.  Then, by the last relation, some $c_j$ is redundant, so we may eliminate that generator, the last relation, and the corresponding relation $b_j\,c_j = 0$.  Then we have $1+g+q+1 - 2$ generators and $q+2 - 3$ relations, so the rank of $H_1(N;\bbz)$ is at least $(1+g+q+1-2) - (q + 2 - 3) = g+1$, which is a contradiction.
\end{proof}

\begin{lem}\label{lem:o-rational}
If $S$ is orientable, then the regular fiber $t$ and boundary component $d$ have infinite order in $H_1(N;\bbz)$. $H_1(N;\bbq)\cong \bbq^{2g+1}$ with basis $\alpha_i, \beta_i,$ and $t$ and where $d=-2\sum c_j$.
\end{lem}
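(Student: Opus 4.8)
The plan is to read everything off the $\bbz$-module presentation of $H_1(N;\bbz)$ obtained by abelianizing the displayed presentation of $\pi_1(N)$. Abelianization kills every commutator, so $H_1(N;\bbz)$ is presented by generators $\alpha_1,\beta_1,\dots,\alpha_g,\beta_g,c_1,\dots,c_q,d,t$ subject to the relations $b_jc_j+a_jt=0$ for $j=1,\dots,q$ together with the single ``long'' relation $c_1+\cdots+c_q+d=0$ coming from the boundary word; the $2g$ generators $\alpha_i,\beta_i$ occur in no relation. That is $2g+q+2$ generators and $q+1$ relations.

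First I would pass to $\bbq$ coefficients and eliminate generators in turn. The long relation solves for $d=-\sum_j c_j$, so $d$ may be discarded; since each $b_j\neq 0$, the relation $b_jc_j+a_jt=0$ then solves for $c_j=-(a_j/b_j)\,t$, so each $c_j$ may be discarded as well. No relations survive, the remaining generators $\alpha_1,\beta_1,\dots,\alpha_g,\beta_g,t$ are free, and hence $H_1(N;\bbq)\cong\bbq^{2g+1}$ with the stated basis, while back-substitution records $d=-\sum_j c_j$ in $H_1(N)$. For the order statements, $t$ is one of the free basis vectors of $H_1(N;\bbq)=H_1(N;\bbz)\otimes\bbq$, so it has nonzero image there and therefore infinite order in $H_1(N;\bbz)$; and running $d$ through the same elimination gives $d=\bigl(\sum_j a_j/b_j\bigr)t$ in $H_1(N;\bbq)$, so $d$ likewise has nonzero image, hence infinite order, as soon as that coefficient is nonzero.

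The step to watch is precisely that last one: unlike $t$, the boundary class $d$ is not itself a free generator, so whether its image in $H_1(N;\bbq)$ survives is governed by the Seifert invariants (it does whenever the coefficients $b_j$ share a sign, so that every summand $a_j/b_j$ is positive) rather than by formal bookkeeping. As a consistency check, the half-lives-half-dies principle forces $H_1(\partial N;\bbq)\to H_1(N;\bbq)$ to have one-dimensional image, matching the fact that $t$ and $d$ become proportional there. The remaining work — abelianizing, counting, and the two elimination passes — is routine linear algebra over $\bbz$ and then $\bbq$.
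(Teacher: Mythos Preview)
Your approach matches the paper's: abelianize the displayed presentation, then over $\bbq$ the relations $b_jc_j+a_jt=0$ and $\sum_j c_j+d=0$ make the $c_j$ and $d$ redundant, leaving $\alpha_i,\beta_i,t$ as a free basis for $H_1(N;\bbq)\cong\bbq^{2g+1}$. The paper's argument that $t$ has infinite order differs slightly in style---rather than passing to $\bbq$, it argues directly over $\bbz$ that no nontrivial integral combination of the relations can produce $nt=0$, since $d$ appears only in the long relation and each $c_j$ in only one of the others. Your route via the image in $H_1(N;\bbq)$ is equally valid and a bit cleaner.

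You are right to flag the claim about $d$. The paper's own proof does not address it: it argues for $t$ and then says the conclusion follows. As you compute, over $\bbq$ one has $d=\bigl(\sum_j a_j/b_j\bigr)t$, and nothing in the standing hypotheses prevents this coefficient from vanishing (for instance if $q=0$, or if the signed fractions $a_j/b_j$ cancel). So the infinite-order assertion for $d$ does not hold without an extra nondegeneracy hypothesis---precisely the condition $\sum_j a_j/b_j\neq 0$ that later appears on the diagonal of the connectivity matrix. The gap you identify is in the lemma as stated, not in your reasoning. (The formula $d=-2\sum c_j$ in the statement also looks like a slip; the long relation gives $d=-\sum_j c_j$, as you have it.)
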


\begin{proof}
Note that $H_1(M)$ has the $\bbz$-module presentation with generators $\alpha_i,\beta_i,c_j,d$, and $t$ and relations
	\begin{eqnarray*}
		b_j\,c_j &=& -a_j\,t,\\
		\sum c_j + d_k &=& 0.
	\end{eqnarray*}
	
	The equation $n\,t=0$, if true, must be a consequence of the above equations.  Since $d$ only appears in the last equation, that equation cannot be involved in proving $n\,t=0$.  Since a different $c_j$ appears in each of the other equations, we conclude $n=0$.  Thus $t$ has infinite order.
	
	With rational coefficients, the $c_j$ are redundant, as is $d$.  The $\alpha_i$, $\beta_i$, and $t$ are essential, and the conclusion follows.
\end{proof}

\subsection{Gluing the Seifert fibered squiggle ends to the graph}

To attach a Seifert fibered end to a node in the graph, we identify the regular fiber of the end with the regular fiber of the node and the boundary component of the end with the boundary component of the node. The generators for $H_1(M;\bbq)$ are:
\begin{itemize}
	\item $\delta_1, \ldots, \delta_g$ for each nonorientable base end,
	\item $\alpha_1,\beta_1, \ldots, \alpha_g, \beta_g$ for each orientable base end,
	\item $t$ for each node, except those nodes to which a nonorientable base end is attached, and
	\item $\gamma$ for each edge in $E$, i.e., $\beta_1(G)$.
\end{itemize}
By Lemmata~\ref{lem:homgraph} and~\ref{lem:n-o-torsion}, each end with nonorientable base produces a relation \(2\sum \delta_i = \sum \pm t^k\) over the $t^k$ for all adjacent nodes. By Lemmata~\ref{lem:homgraph} and~\ref{lem:o-rational}, each end with orientable base produces a relation \( \sum a_j/b_j\, t = \sum \pm t^k \) again over $t^k$ for all adjacent nodes. These are the only relations.

For each nonorientable base end, we choose to eliminate the generator $\delta_g$ and the relation \(2\sum \delta_i = \sum \pm t^k\). By some linear combination of the relations, we may be able to eliminate some of the regular fiber generators. To determine which regular fibers are essential and which are not, consider the \emph{orientable subgraph} $H$, the subgraph of $G$ obtained by removing all nonorientable base ends and their nodes.  Let $A$ denote the \emph{connectivity matrix} of $H$:
\begin{itemize}
	\item order the vertices of $H$,
	\item in the diagonal entry $(i,i)$, put the quantity $-\sum a_j/b_j$ coming from the exceptional fibers of the $i$th end,
	\item in the off-diagonal entry $(i,j)$, put the signed number of edges in $H$ connecting node $i$ to node $j$.
\end{itemize}

These are the only relations that can kill a regular fiber (or linear combination thereof) in $H_1(M)$.  $A$ is a symmetric matrix and may be diagonalized to $\mathrm{diag}{(A)}$.  The number of diagonal zeros in $\mathrm{diag}(A)$ is the number of free generators of $H_1(M)$ coming from regular fibers, which we call the \emph{number of regular fibers that survive in $H_1(M)$}.  If we record how we diagonalize $A$, we can specify which regular fibers are left as free generators and which are combinations of these generators (cf. Example~\ref{exam:SurvivingFiberFromTwoNodes}).

\begin{thm}\label{thm:homologyofgraphmanifold}
	Let $M$ be a graph manifold with graph $G$.  Let $r$ denote the number of regular fibers that survive.  Let $b = \beta_1(G)$.  Let $g_- = \sum_B g(B)-1$ where $B$ ranges over the bases of the nonorientable base ends of $M$ and where $g(B)$ is the number of summands in $B = P^2\#\cdots\# P^2$.  Let $g_+ = \sum_S g(S)$, where $S$ ranges over the bases in the orientable base ends of $M$.  Then 
\[ H_1(M;\bbq) \cong \bbq^b \oplus \bbq^r \oplus \bbq^{2g_+} \oplus \bbq^{g_-} \] 
with the suggested basis.  More specifically, the generators consist of graph loops $\gamma_i$ (for $1\leq i \leq b$), surviving regular fibers $t^k$ (for $1\leq k \leq r$), orientable genus generators $\alpha_j^S$ and $\beta_j^S$ (for each orientable base $S$ and $1 \leq j \leq g(S)$), and nonorientable genus generators $\delta_i^S$ (for each nonorientable base $S$ and $1 \leq i \leq g(S)-1$).
\end{thm}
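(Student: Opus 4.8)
The plan is to assemble the proof from the sequence of Mayer--Vietoris computations already carried out in this section, so that the theorem becomes a bookkeeping statement rather than a new computation. First I would reduce to counting: by Lemma~\ref{lem:homgraph}, the space $H_1(mT\cup mE)$ is free with an explicit basis consisting of the $t^i$ (one regular fiber per node), the surviving boundary-component generators of each node, and the $b = \beta_1(G)$ loop classes $\gamma_i$. Attaching the squiggle ends is then a purely relational step: each Seifert end glues its regular fiber to the node's regular fiber and its boundary curve $d$ to the node's free boundary generator, so the node's boundary generator is eliminated and one relation is introduced, namely the relation coming from $H_1(N;\bbq)$ in Lemma~\ref{lem:n-o-torsion} (nonorientable case) or Lemma~\ref{lem:o-rational} (orientable case). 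The new free generators contributed are exactly the $\delta_i^S$ and $\alpha_j^S, \beta_j^S$ from those two lemmas, and the claim to verify is that after imposing all these relations the remaining rank splits as stated.

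Next I would organize the relations into the two types and peel them off in two passes. For each nonorientable end with base $S$, the relation reads $2\sum_i \delta_i^S = \sum \pm t^k$; since $\delta_g^S$ appears in this relation and in no other, I can use it to eliminate both $\delta_g^S$ and this relation simultaneously, leaving the $g(S)-1$ free generators $\delta_1^S,\dots,\delta_{g(S)-1}^S$ and also removing the regular fiber $t^k$ of the node to which $S$ is attached (this is why the theorem excludes those nodes from its list of fiber generators). Summing over all nonorientable ends accounts for the $\bbq^{g_-}$ summand with $g_- = \sum_B (g(B)-1)$, and reduces us to the orientable subgraph $H$. For each orientable end the $\alpha_j^S,\beta_j^S$ are untouched by any relation (they appear only as free generators, cf. Lemma~\ref{lem:o-rational}), contributing $\bbq^{2g_+}$; the only remaining relations are the ``fiber-killing'' relations $\sum_j (a_j/b_j)\,t = \sum \pm t^k$, one per node of $H$, and these are exactly encoded by the connectivity matrix $A$.

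The heart of the argument is then the linear-algebra claim that the number of regular-fiber generators surviving in $H_1(M;\bbq)$ equals the corank (number of zero eigenvalues) of $A$. I would phrase it as follows: the fiber-killing relations say precisely that the vector of fiber classes $(t^1,\dots,t^m)$ lies in the kernel of the map $\bbq^m\to\bbq^m$ given by $A$ --- equivalently, the subspace of $H_1(mT\cup mE)$ spanned by the fibers of $H$, modulo the span of these relations, has dimension $\dim\ker A = m - \rank A$, which is the count $r$ of diagonal zeros after symmetric diagonalization. Because $A$ is symmetric it diagonalizes over $\bbq$ by congruence, and recording the congruence change-of-basis is exactly the recipe in the paragraph before the theorem for naming which fibers survive; no further relations involve the $\gamma_i$, so the $b$ loop generators pass through untouched, giving the $\bbq^b$ summand. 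Assembling the four independent blocks $\bbq^b$, $\bbq^r$, $\bbq^{2g_+}$, $\bbq^{g_-}$ then yields the stated isomorphism with the claimed basis.

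I expect the main obstacle to be making the last step fully rigorous rather than heuristic: one must check that the fiber-killing relations and the (already-eliminated) nonorientable relations do not secretly interact --- i.e., that after the first pass the surviving fibers in $H$ really are governed by $A$ alone and that eliminating $\delta_g^S$ did not re-introduce a fiber of $H$ into some other relation. This is handled by observing that a nonorientable end is attached to a node that is, by definition of $H$, deleted from the orientable subgraph, so its fiber variable never appears in any relation surviving into the second pass; but spelling this out carefully, together with the claim that the relations listed are genuinely \emph{all} the relations (which rests on the Mayer--Vietoris sequences of Lemmas~\ref{lem:homologytree}--\ref{lem:homgraph} having no hidden $H_2$ boundary contributions from the ends, true because each $H_2$ of a punctured-$S^2$-times-$S^1$ or a bounded Seifert piece injects appropriately), is where the real care lies.
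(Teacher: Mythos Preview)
Your approach is essentially identical to the paper's: the theorem there has no separate proof environment but is stated as a summary of the discussion immediately preceding it, which assembles Lemmas~\ref{lem:homgraph}, \ref{lem:n-o-torsion}, and \ref{lem:o-rational} in precisely the two passes you describe (eliminate $\delta_g$ first, then reduce the fiber relations via the connectivity matrix $A$).

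One small slip worth fixing: the regular fiber $t$ of a node attached to a nonorientable end is \emph{not} removed by the $\delta_g$-elimination step. The relation $2\sum_i\delta_i^S = \sum\pm t^k$ involves only the fibers $t^k$ of the nodes \emph{adjacent} to that node (this is Lemma~\ref{lem:homgraph}: $d^i_{p_i}$ is expressed in terms of neighbouring $t^{k(j)}$), not the fiber of the node itself. That fiber is killed by a separate mechanism, namely the identification of the node's fiber with the end's fiber together with $2t=0$ from Lemma~\ref{lem:n-o-torsion}, so $t=0$ over $\bbq$. The paper simply omits these fibers from the generator list up front. Your final count is unaffected, since you correctly note in the last paragraph that these fibers are absent from the orientable subgraph $H$ and hence from $A$.
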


\subsection{Surviving regular fibers}\label{sec:SurvivingFibers}

Let $A$ denote the connectivity matrix as above.  Let $t^i$ denote the regular fibers, and think of them as forming a basis for the range of the associated linear transformation.  It is correct to think of this transformation as right multiplication by $A$, though since $A$ is symmetric, we may think of left multiplication.  Then $A$ is a `boundary map' in the sense that a row of $A$ describes the boundary of [a fraction of] an immersed surface in the graph manifold.  The surface corresponding to $t^i$ consists of the orientable base of that node together with the meridinal disk in the critical fiber and some annuli given by the node information (see Section~\ref{section:TSurface} for an explicit construction).  We must clear denominators in $A$ to get an actual, immersed surface.  This map $A$ records no information about the non-orientable regular fiber boundary components of these surfaces, but the $t^i$ from non-orientable ends die over $\bbq$.  We think of these immersed surfaces as a basis for the domain of this transformation.

The row-reduced echelon form of $A$ is convenient since it represents the same transformation with range-basis the same $t^i$ as before.  As discussed above, the survival or death information of the $t^i$ in $H_1(H;\bbq)$ is contained within $A$.  Therefore, the $r$ from Theorem~\ref{thm:homologyofgraphmanifold} is equal to the number of columns in $\mathrm{rref}(A)$ that do not contain a pivot position.  The $t^i$ in these free columns given the chosen basis for the $\bbq^r$ in Theorem~\ref{thm:homologyofgraphmanifold}.

Note that the surviving linear combinations of $t^i$ give rise to solutions to $xA = 0$.  From these solutions, we get linear combinations of the immersed surfaces mentioned above which form closed, immersed surfaces (perhaps after adding on some Klein bottles in non-orientable fibers; see Section~\ref{section:TSurface}).  These closed immersed surfaces can be used to construct intersection duals to the surviving $t^i$.  We illustrate this first with an example.

\begin{exam}\label{exam:SurvivingFiberFromTwoNodes}
	Take a Seifert fibered space with base $S^2$ and one critical fiber of type $-1/2$ and plumb it once (positively) to a Seifert fibered space with base $S^2$ and one critical fiber of type $-2$.  Let $S$ in the first Seifert fibered space denote the critical disk glued to one twice-punctured $S^2$ and two annuli connecting up to the other Seifert fibered space.  Then $\del S = 2t^1 + t^2$. In the other space, let $P$ denote the rational 2-chain with boundary $t^1 + \frac{1}{2}t^2$.  It is not representable by an immersed surface, but $2P$ is: take two copies of the base surface, the critical disc, and the annulus carrying a regular fiber near the critical fiber to a regular fiber on the boundary.  Here is a schematic of how to paste together these pieces to get an immersion of a closed surface:
	
	\begin{center}
		\begin{overpic}{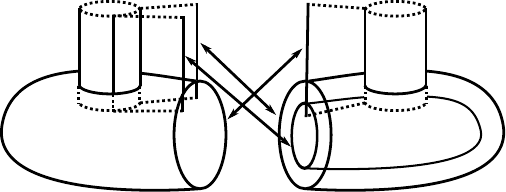}
			\put(20, 5){$S$}
			\put(80, 8){$2P$}
			\put(41, 32){$t^1$}
			\put(55, 32){$t^2$}
		\end{overpic}
	\end{center}
	
	The connectivity matrix is $A = \begin{pmatrix}
		2&1 \\ 1 & 1/2
	\end{pmatrix}$, which reduces to $\begin{pmatrix}
		2&1 \\ 0&0
	\end{pmatrix}$.  We declare $t^2$ to be our preferred generator for $H_1(M)$ and $t^1$ to be redundant. The kernel of $A$ is generated by $P - 1/2S$, so we clear denominators and set $F = 2P - S$.  The class $F$ is represented by a closed immersed surface: push all boundary components of $S$ and $P$ to meet at the plumbing, and glue the two $t^1$'s from the $S$ side to the two horizontal boundary components of $2P$ and the $t^2$ from the $2P$ side to the one horizontal boundary component of $S$.

	This $F$ -- rather, an embedded surface homologous to $F$ (described in \ref{section:TSurface} ) --  is the intersection dual of $t^2$.  In this simple example, there are no other homology classes to consider.  In Example~\ref{exam:ExWLoop}, we consider a different graph manifold that has more homology classes but the same connectivity matrix.
	
\end{exam}

\section{The intersection ring}\label{section:cohomology}

Instead of computing the cohomology ring $H^\ast(M)$ of a graph manifold directly, we will compute the intersection ring on $H_\ast(M)$.  It is well known that the cup product is Poincar\'e dual to intersection~\cite[p. 156]{HiltonWylie}. More precisely, if $A$ and $B$ are imbedded, smooth submanifolds of $M$ that intersect transversally, then $PD[A] \smile PD[B] = PD[A\cap B]$.  If $x$ is a curve representing a basis element of $H_1(M;\bbq)$, the intersection dual $ID(x)$ will denote the second homology class which can be represented by an embedded, oriented surface that intersects $x$ geometrically once and is disjoint from all other curves in the basis. The intersection product will be denoted $[A]\cdot[B] = [A\cap B]$.

By Theorem~\ref{thm:homologyofgraphmanifold}, there are five types of generators for $H_1(M)$ of a graph manifold:

\begin{enumerate}
	\item type $\alpha$: coming from the homology of the orientable base surfaces,
	\item type $\beta$: also coming from the orientable base surfaces, and dual on those surfaces to the $\alpha$ curves,
	\item type $\gamma$: coming from loops in the underlying graph,
	\item type $\delta$: coming from the homology of the nonorientable base surfaces, and
	\item type $t$: coming from surviving regular fibers.
\end{enumerate}

Note that representatives of $\alpha$, $\beta$, $\delta$, and $\gamma$ may be chosen with the following properties:

\begin{enumerate}
	\item (subsets of) the $\alpha$ and $\beta$ form symplectic bases of curves on each orientable base surface,
	\item the $\delta$ curves are disjoint,
	\item the $\gamma$ curves are comprised of arcs, each arc lying on a base surface and missing all $\alpha$, $\beta$, and $\delta$ curves and other $\gamma$ arcs.
\end{enumerate}


Our strategy in describing the ring $(H_\ast(M),\cdot)$ is to construct intersection duals to the above generators and then to define the intersection product $H_2(M)\times H_2(M)\to H_1(M)$ in terms of the dual basis. This will completely describe the ring structure, just as the cup product on $H^1(M)\times H^1(M)$ completely determines the full cohomology ring of a closed, oriented $3$-manifold.

\subsection{A basis for $H_2(M)$ via intersection duals}

\subsubsection{Duals of type $\alpha$ and $\beta$} Any type $\alpha$ generator may be represented by a circle $\alpha$ embedded on an orientable base surface $S$. This circle has a dual circle, $\beta$, on $S$, and $\beta$ is a type $\beta$ generator of $H_1(M)$.  Let $t$ denote the regular fiber over this base surface ($t$ is not necessarily a generator of $H_1(M)$).  The torus $\beta \times t$ intersects $\alpha$ in a point. Since $\beta \times t$ is supported in a Seifert fibered end of $M$, $\alpha$ and $\beta$ are dual on $S$, and the $\gamma$ curves are disjoint from the $\alpha$'s and $\beta$'s, we see that $[\beta\times t] = ID[\alpha]$.  Likewise, $-[\alpha\times t]=ID[\beta]$.

\subsubsection{Duals of type $\delta$}\label{sec:klein}  $H_1( \#_n \bbr P^2)\cong \bbq^{n-1}$ with a basis consisting of disjoint curves each with self-intersection one (i.e. their regular neighborhoods are M\"obius bands). To see this, take the usual $2n$-gon presentation for $\#_n \bbr P^2$ and pick the generators to be pushoffs of $n-1$ of the sides (with distinct labels). In the orientable $S^1$ bundle over $\#_n \bbr P^2$, there are $n-1$ disjoint Klein bottles over these generator circles.  The Klein bottles are incarnations of the relations $\delta\,t\delta\inv\,t=1$ in the fundamental group. Each of these Klein bottles intersects the corresponding $\delta$ generator algebraically once and is $ID[\delta]$.

\subsubsection{Duals of type $\gamma$} The type $\gamma$ curves generate the homology of $M$ that comes from the topology of the underlying graph. Let $T$ denote the maximal tree of the graph of $M$, as in Section~\ref{section:homology}. For each edge $e$ not in $T$, there is a plumbing (and a plumbing torus $\tau$) corresponding to $e$.  One may pick $b$ disjoint arcs in $mT$ so that after performing these plumbings, the arcs close up and become the full set of $\gamma$ curves (there are $b$ of them). Each plumbing torus $\tau$ is $ID[\gamma]$, since $\tau = t_1 \times t_2$ is a product of regular fibers from two adjacent Seifert fibered pieces in $M$.

\subsubsection{Duals of type $t$}\label{section:TSurface} The duals of the type $t$ curves are the most difficult to construct, and we have already introduced these duals in Section~\ref{sec:SurvivingFibers} from the upside-down point of view. Recall that the connectivity matrix can be thought of a boundary map that counts the boundary of immersed surfaces in the graph manifold (given by orientable bases with traces of regular fibers across those bases). In the discussion of Section~\ref{sec:SurvivingFibers}, the type $t$ generators for $H_1(M)$ correspond to the kernel of the connectivity matrix. If $F$ is a nontrivial element in the kernel of the connectivity matrix, then some integer multiple of $F$ can be built by a finite number of copies of the immersed surfaces with boundary, like $S$ and $P$ from Example~\ref{exam:SurvivingFiberFromTwoNodes}. The result is an immersed surface that may be closed but may have boundary consisting of some fibers from nonorientable base Seifert fibered ends, but these curves have order two in the nonorientable ends and may be capped off (after perhaps taking two copies of this surface) with punctured Klein bottles inside these ends. These Klein bottles are as in \ref{sec:klein} using the type $\delta$ curve \emph{not} used in the basis for $H_1$ (this helps ensure certain surfaces are disjoint). The result is a closed immersed surface $F$ with some type $t$ curve that intersects $F$ in a finite number of points. We may choose a basis for the type $t$ curves so that each $t$ curve intersects one and only one of these immersed surfaces.

To recap, $F$ is built up from various base surfaces throughout the graph manifold, annuli connecting up regular fibers of the various ends, critical discs, and Klein bottles.  There are choices, especially about how these annuli move around the graph to connect various regular fibers.  These choices are dictated by the graph, though not uniquely; this ambiguity is accounted for in the bottom right entry of the matrix in Theorem~\ref{thm:IntersectionRing}.  Note that no annuli need to approach an unoriented base end.

The immersed surface $F$ may be distorted to an embedded one. Its self-intersections consist of arcs where the annuli and base spaces intersect, and these arcs proceed from the neighborhood of a critical fiber to the boundary of the end. These may be resolved as follows (note the orientations): 

\begin{center}
	\begin{overpic}{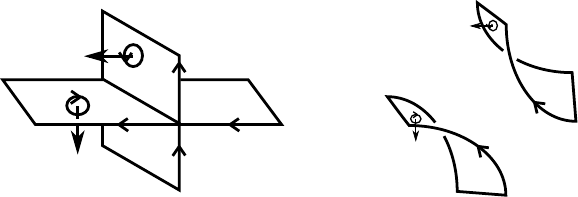}
		\put(53, 20){$\leadsto$}
	\end{overpic}
\end{center}

Note that this resolution is compatible with the critical disks, which are embedded.  Let us take for example a torus around a critical fiber of type $-2/1$, to mirror what is happening with the $2P$ in Example~\ref{exam:SurvivingFiberFromTwoNodes}.  Two copies of the base surface are needed to match up with one annulus. On the torus near the critical fiber, the resolution is 

\begin{center}
	\begin{overpic}{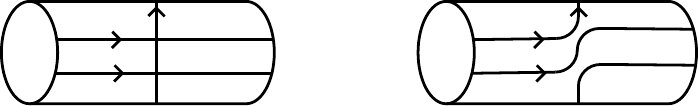}
		\put(45, 5){$\leadsto$}
	\end{overpic}
\end{center}

which can be capped off, using the critical disc, to form a closed, embedded surface homologous to $F$.

The resulting embedded surface (possibly scaled by a constant to ensure $[F]\cap [t]=1$) may be taken to be $ID[t]$. 

\subsection{The intersection product on $H_2$}

A symplectic pair of generators from end $i$ of type $\alpha$ and $\beta$ have duals that intersect each other according to $ID[\beta]\cap ID[\alpha]=[\alpha\times t]\cap[\beta\times t]=[t]$, and these duals may intersect the duals of type $t$ according to $ID[\beta]\cap ID[t]=[\alpha\times t]\cap[F]=n_i[\beta]$, etc., and are otherwise disjoint from all duals. The duals of type $\delta$ are disjoint Klein bottles, and they are disjoint from all other duals since they are supported in the ends over nonorientable bases. A dual of type $\gamma$ is a plumbing torus and may intersect the duals of type $t$; these intersections will be copies of type $t$ curves, as in $ID[\gamma]\cap ID[t]=[t_i\times t_j]\cap [F]=n_i[t_j]+n_j[t_i]$. Otherwise, the duals to type $\gamma$ will be disjoint from all other duals. The embedded surfaces $F$ that are dual to generators of type $t$ have trivial self-intersection, but they may have intersections among themselves as $[F]\cap [F']=\sum C_i[\gamma_i]$ for some $C_i$.

We summarize this intersection product in the following theorem:

\begin{thm}\label{thm:IntersectionRing}
	Let $M$ be a closed, orientable graph manifold. Let $A,B,D,C$ and $T$ denote the subspaces of $H_2(M;\bbq)$ spanned by the intersection duals to the $H_1$ generators of type $\alpha, \beta,\delta,\gamma$ and $t$, respectively. Then the following table summarizes the intersection product on $H_2(M;\bbq)$:
	
	\begin{center}
	\begin{tabular}{c|ccccc}
		$\cdot$& $A$ & $B$ & $D$ & $C$ & $T$\\ \hline
		$A$& $0$ & $t$s & $0$ & $0$& $\beta$s \\
		$B$& $t$s & $0$ & $0$ & $0$ & $\alpha$s \\
		$D$& $0$ & $0$ & $0$ & $0$ & $0$ \\
		$C$& $0$ & $0$ & $0$ & $0$ & $t$s \\
		$T$& $\beta$s & $\alpha$s & $0$ & $t$s & $\gamma$s
	\end{tabular}
	\end{center}
	An entry of this table may be read as, for example, ``The duals to the $\alpha$ curves intersect the duals to the $t$ curves homologically in linear combinations of the $\beta$ curves.''  As is generally the case, this intersection product is skew-symmetric.  Moreover, for each type $\alpha$ generator, there is either zero or one type $\beta$ generator whose dual intersects $\alpha$'s dual nontrivially.
\end{thm}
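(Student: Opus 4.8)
The plan is to go through the table entry by entry, using the explicit geometric representatives of the five families of intersection duals built in Section~\ref{section:cohomology}, put each pair of representatives in general position, and read off the $H_1$-class carried by the intersection. No separate argument is needed for skew-symmetry: for classes $a,b\in H_2$ of an oriented $3$-manifold, $a\cdot b=(-1)^{(3-2)(3-2)}\,b\cdot a=-\,b\cdot a$ by the usual properties of the intersection product, so it is enough to treat the entries on or above the diagonal; moreover this already forces every self-intersection to vanish over $\bbq$, since $x\cdot x=-\,x\cdot x$ gives $x\cdot x=0$ in a $\bbq$-vector space. Throughout, I would fix the representatives so that all of the disjointness properties listed in Section~\ref{section:cohomology} hold at once: the $\alpha$'s and $\beta$'s are symplectic bases on the orientable base surfaces, the $\delta$'s are disjoint curves on the nonorientable bases, the $\gamma$'s are unions of arcs lying on base surfaces and missing all $\alpha,\beta,\delta$ and one another, and each embedded surface $F$ dual to a type~$t$ curve meets an end over a nonorientable base only in punctured Klein bottles over the $\delta$ curve that is \emph{not} in the $H_1$ basis, meets the base surfaces only transversally along parallel copies of them, and has its connecting annuli routed along $\gamma$-type arcs.

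With these choices the zero entries are immediate. A type~$\delta$ dual is a Klein bottle supported in an end over a nonorientable base, and no type $\alpha$, $\beta$, or $\gamma$ dual enters such an end, while an $F$ enters it only in Klein bottles over the \emph{unused} $\delta$ curve, disjoint from the basis Klein bottles; hence the whole $D$ row and column vanish. Distinct type~$\alpha$ duals $\beta_j\times t$ are disjoint (the $\beta_j$ are disjoint on each base, and different ends are disjoint), so $A\cdot A=0$; distinct type~$\gamma$ duals are plumbing tori at distinct plumbings, hence can be made disjoint, so $C\cdot C=0$; and the diagonal of the $T\times T$ block is $0$ by the self-intersection remark. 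Finally $C\cdot A=C\cdot B=C\cdot D=0$ because a plumbing torus lies in the graph part $mT\cup mE$ and can be isotoped off of the squiggle ends carrying those three families.

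For the nonzero entries one computes the geometric intersections. $A\cdot B$: if $\alpha=\alpha^S_j$ and $\beta'=\beta^{S'}_k$ lie in different ends their duals $\beta^S_j\times t_S$ and $-\,\alpha^{S'}_k\times t_{S'}$ are disjoint, while if $S=S'$ the intersection is $\pm(\beta^S_j\cdot\alpha^S_k)_S\,[t_S]=\pm\delta_{jk}\,[t_S]$, which is the regular fiber $t_S$ (or $0$ when $t_S$ does not survive in $H_1(M)$); this simultaneously proves the last sentence of the theorem, since for a fixed $\alpha^S_j$ only its symplectic partner $\beta^S_j$ can produce a nontrivial pairing, and even that is $0$ when $t_S$ dies. $A\cdot T$ and $B\cdot T$: the torus $\beta^S_j\times t_S$ (resp.\ $\alpha^S_j\times t_S$) meets $F$ only in the finitely many parallel copies of the base surface $S$ occurring in $F$ — it misses the annuli of $F$, which were routed along arcs avoiding the $\alpha$'s and $\beta$'s — and each such intersection curve is isotopic to $\pm\beta^S_j$ (resp.\ $\pm\alpha^S_j$), so the product is a multiple of $\beta^S_j$ (resp.\ $\alpha^S_j$). $C\cdot T$: writing the $\gamma$-dual as a product $\tau=t_i\times t_j$ of regular fibers of two adjacent nodes, $\tau$ meets $F$ along boundary-parallel curves from the base-surface copies in $F$ (each homologous to the plumbed boundary component, hence to $\pm t_j$) and along fiber curves from the annuli of $F$ that run through that plumbing (each $\pm t_i$), giving $n_i[t_j]+n_j[t_i]$. $T\cdot T$: for $F\neq F'$ the intersection comes from the crossings of the annuli of $F$ and $F'$ as they wind around the loops of the graph, and these crossing curves are parallel to $\gamma$ curves, so $[F]\cdot[F']$ is a linear combination of the $\gamma$'s.

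The real work — and the main obstacle — is the bookkeeping for the type~$t$ duals. One must simultaneously resolve the self-intersection arcs of each immersed $F$ into an embedded surface compatibly with the critical disks, as in the figures of Section~\ref{section:TSurface}; control exactly which parallel copies of each base surface and which connecting annuli appear in $F$ and how those annuli are routed through the graph; and arrange all of this so that the disjointness properties invoked above genuinely hold at the same time, namely $F$ transverse to the $\alpha$ and $\beta$ curves only through base copies, $F$ capping off in the nonorientable-base ends only with Klein bottles over the unused $\delta$ curve, and two distinct $F$'s meeting only along $\gamma$-parallel curves. Because the routing of the annuli is not canonical, the class $[F]\cdot[F']$ is only determined up to this ambiguity, and that ambiguity is exactly what is absorbed into the ``$\gamma$s'' in the bottom-right entry of the table. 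Once the representatives are pinned down, each remaining entry is a one-line transversality computation together with the naturality of the intersection product, as indicated above.
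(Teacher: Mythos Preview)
Your proposal is correct and follows essentially the same approach as the paper: the paper's argument is the paragraph in Section~4.2 immediately preceding the theorem statement, which computes each entry of the table by intersecting the explicit geometric representatives built in Sections~4.1.1--4.1.4, exactly as you do. You are somewhat more explicit than the paper about a few points (skew-symmetry forcing the diagonal to vanish, why distinct plumbing tori and distinct $\beta_j\times t$ tori are disjoint, and the routing of the annuli in $F$ away from the $\alpha,\beta,\delta$ curves), but these are elaborations of the same computations rather than a different method.
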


\begin{exam}\label{exam:ExWLoop}
	Let $M$ be the graph manifold with three nodes, all plumbed by $+J$ by a triangular graph, such that the bases for the Seifert fibered ends are

	\parbox{.7\textwidth}{	\begin{itemize}
			\item $P$: genus $-3$ with one critical fiber of type $1/1$
			\item $Q$: genus $1$ with one critical fiber of type $-1/2$
			\item $R$: genus $1$ with one critical fiber of type $-2/1$
		\end{itemize}
		}\parbox{.2\textwidth}{
		\begin{center}
			\begin{overpic}{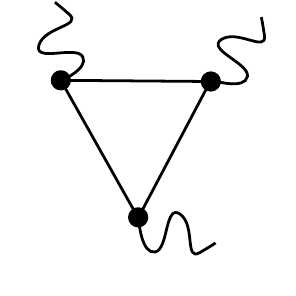}
				\put(0, 70){$P$}
				\put(42, 76){$+$}
				\put(85, 70){$Q$}
				\put(22, 45){$+$}
				\put(60, 45){$+$}
				\put(30, 20){$R$}
			\end{overpic}
		\end{center}
		}
	
	Let $t_P$, $t_Q$, and $t_R$ denote the regular fibers corresponding to the ends with bases $P,Q$ and $R$, respectively. Let $\alpha_1,\beta_1$ be a symplectic basis of curves on $Q$ and $\alpha_2,\beta_2$ be a symplectic basis of curves on $R$. Let $\delta_1$ and $\delta_2$ be disjoint type $\delta$ generators on $P$. There is one graph loop $\gamma$, whose dual is chosen to be the plumbing torus between $P$ and $R$.

	Recall that the connectivity matrix does not record information about nonorientable vertices, so this matrix is the same as in Example~\ref{exam:SurvivingFiberFromTwoNodes}. Note an immersed surface may be constructed from $R-2Q$ but has two boundary components, each of which is $t_P$. Since $2t_p=0$ inside the Seifert fibered space over $P$, we can cap off $R-2Q$ inside that end and get an immersed surface $F$.  Note that $F$ is dual to $t_R$ since $F$ contains one copy of $R$. We choose $t_R$ as the generator of the regular fiber subspace and note that $t_q = -2t_R$.
	
	A basis for $H_1(M;\bbq)$ is $\{\alpha_1,\beta_1,\alpha_2,\beta_2,\delta_1,\delta_2, \gamma, t_R\}$ with dual basis $\{A_1,B_2,A_2,B_2,D_1,D_2,C,F\}$, where all but $C$ and $F$ are described in 4.1.1 and 4.1.2. The dual elements $C$ and $F$ are described in the previous two paragraphs.
	
	Geometrically, $A_1\cap B_1 = t_Q$ and $A_2\cap B_2 = t_R$; other than the intersections with $F$, these are the only intersections of any $A$ or $B$ with any other elements in the basis. Since $F$ contains one positive copy of $R$, we see $A_2\cap F = -\beta_2$ (since $A_2\cap B_2 = t_R$ and $F$ is the dual of $t_R$).  Similarly, $B_2\cap F = \alpha_2$, $B_1\cap F = -2\alpha_1$, and $A_1\cap F = 2\beta_1$.  Geometrically, the duals $D_i$ represented by Klein bottles are disjoint from all other members of the basis, and $C\cap F = t_p$, which is trivial in homology.  The intersection form on homology is
	
	\begin{center}
		\begin{tabular}{c|cccccccc}
			$\cdot$ & $A_1$ & $B_1$ & $A_2$ & $B_2$ & $D_1$ & $D_2$ & $C$ & $F$\\ \hline
			$A_1$ & $0$ & $-2t_R$ & $0$ & $0$ & $0$ & $0$ & $0$ & $2\beta_1$ \\
			$B_1$ & $2t_R$ & $0$ & $0$ & $0$ & $0$ & $0$ & $0$ & $-2\alpha_1$ \\
			$A_2$ & $0$ & $0$ & $0$ & $t_R$ & $0$ & $0$ & $0$ & $-\beta_2$ \\
			$B_2$ & $0$ & $0$ & $-t_R$ & $0$ & $0$ & $0$ & $0$ & $\alpha_2$ \\
			$D_1$ &$0$ & $0$ & $0$ & $0$ & $0$ & $0$ & $0$ & $0$ \\
			$D_2$ & $0$ & $0$ & $0$ & $0$ & $0$ & $0$ & $0$ & $0$ \\
			$C$ & $0$ & $0$ & $0$ & $0$ & $0$ & $0$ & $0$ & $0$ \\
			$F$ & $-2\beta_1$ & $2\alpha_1$ & $\beta_2$ & $-\alpha_2$ & $0$ &  $0$ & $0$ & $0$ 
		\end{tabular}
	\end{center}
	
	Note that $D_1,D_2,$ and $C$ do not intersect anything homologically and split off a $\bbq^3$ summand of $H_2$ with trivial intersection form.  The remaining $A_1,B_1,A_2,B_2,F$ intersect in a way reminiscent of the genus two surface cross the circle; the only difference is that the $A_1$ and $B_1$ intersection is weighted differently than the intersection between $A_2$ and $B_2$.  This phenomenon is explained rigorously in Section~\ref{section:TreeManifolds}.
	
\end{exam}

\section{Examples: tree graph manifolds}\label{section:TreeManifolds}

Define a \emph{tree graph manifold} to be a graph manifold whose associated graph is a tree. Throughout this section, let $M$ be a tree graph manifold where each Seifert fibered space in the manifold has an orientable base.  Thus there are no type $\delta$ and no type $\gamma$ generators for $H_1(M)$, only type $\alpha$, $\beta$, and $t$. It is convenient to think of the $\alpha$s and $\beta$s as living on orientable surfaces $S_i$ (with regular fibers $t_i$). However, some of the $t_i$s may be redundant. Recall that we chose a basis for the type $t$ subspace of $H_1(M)$ by first constructing its dual basis from linear combinations of the $S_i$. This is a subtle matter that will be clarified through definitions and examples.

\begin{defn}\label{def:ConnectedSum}(cf.~\cite{AAM:ConnectedSumsOfRings})
	Let $R$ and $S$ be cohomology rings of closed, oriented $3$-manifolds (more generally, local Gorenstein rings), and let $T$ be $\bbq^n$ for some $n$ (more generally, a local ring). Given homomorphisms $\epsilon_R: R \longrightarrow T \longleftarrow S: \epsilon_S$, form the fiber product $R \times_T S := \{ (r,s): \ \epsilon_R(r)=\epsilon_S(s) \}$. Given a $T$-module $V$ and homomorphisms $\iota_R:V\to R$ and $\iota_S:V\to S$ that satisfy $\epsilon_R\iota_R = \epsilon_S\iota_S$, form the \emph{connected sum of rings} by \[ R\#_T S := \left( R\times_T S \right) / \left\{ \left( \iota_R(v), \iota_S(v) \right): v\in V\right\} \]
\end{defn}

The connected sum of rings has as a special case the ring of a connected sum of manifolds. Let $M$ and $N$ be $3$-manifolds with cohomology rings $R = R^0 \oplus R^1 \oplus R^2 \oplus R^3$ and $S = S^0 \oplus S^1 \oplus S^2 \oplus S^3$, respectively. We will work with $\bbq$ coefficeints. The cohomology ring of $M\# N$, morally speaking, has the $H^1$ and $H^2$ of $M$ and of $N$, but has only one $\bbq$ for $H^0$ and one $\bbq$ for $H^3$.  Let $T = \bbq$ and take $\epsilon_R$ and $\epsilon_S$ to be the projections onto the degree zero pieces of those rings. These are ring homomorphisms onto $\bbq$. Some examples of elements in $R\times_T S$ are $(1,1)$ for $1\in R^0=\bbq$ and $1\in S^0=\bbq$, $(r,0)$ for $r\in R^1\oplus R^2\oplus R^3$, and $(0,s)$ for $s\in S^1\oplus S^2 \oplus S^3$. In other words, the pushout $R\times_T S$ sets the degree $0$ part of $R$ and $S$ equal while retaining all elements of higher degree (and their products). To get $H^\ast(M\# N)$, one must set the fundamental class of $M$ equal to the fundamental class of $N$, and this is achieved by $\iota_R$ and $\iota_S$. Define $V=\bbq$ and $\iota_R$ and $\iota_S$ by $\iota_R(1) = 1 = [M] \in R^3 = \bbq$ and $\iota_S(1) = 1 = [N] \in S^3 = \bbq$. Then $\epsilon_R\iota_R=0=\epsilon_S\iota_S$ and so $R\#_T S$ is defined. It is easy to check that $R\#_T S = H^\ast(M\# N)$.

In all of our examples, $V$ will be $\bbq$, $T$ will be $\bbq$, $\iota_R$ and $\iota_S$ will be as above, and $\epsilon_R$ and $\epsilon_S$ will be defined on $R^0\oplus R^1$ and $S^0 \oplus S^1$.  In all cases $\epsilon_R\iota_R=0=\epsilon_S\iota_S$.

In a tree graph manifold, there are tori $\alpha_{i,j} \times t_j$ and $\beta_{k,l} \times t_l$ that live over different bases $\Sigma_j$ and $\Sigma_l$. Of course $(\alpha_{i,j}\times t_j) \cap (\beta_{k,l} \times t_l) = \delta_{ik}\delta_{jl}t_j$ \emph{geometrically}, but to write this intersection down algebraically, one has to know how to write $t_j$ in terms of our chosen basis for $H_1=H^2$.  The idea to compute the intersection product is to calculate it on the Seifert fibered pieces and then patch it all together via the connected sum (of rings) construction.

\begin{thm}
	The cohomology ring of a tree graph manifold with orientable bases only is a connected sum of the cohomology rings of $\widehat{\Sigma}_i \times S^1$, where the $\Sigma_i$ are the orientable bases in the tree graph manifold and where $\widehat{\ }$ denotes closing the surface by capping off boundary components.
\end{thm}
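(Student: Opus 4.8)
The plan is to pass to the intersection ring --- which by Poincar\'e duality determines $H^*(M;\bbq)$, as recalled at the beginning of Section~\ref{section:cohomology} --- and to build that ring up from the individual Seifert fibered pieces by an induction along the tree, identifying each stage of the construction as a connected sum of rings in the sense of Definition~\ref{def:ConnectedSum}. The local input is easy to describe: by Theorem~\ref{thm:homologyofgraphmanifold} a single Seifert fibered end over an orientable base $\Sigma_i$ carries only the rational classes $\alpha^i_j$, $\beta^i_j$, and the regular fiber $t_i$, and the products of their intersection duals --- $ID[\alpha^i_j]\cdot ID[\beta^i_j]=t_i$, $ID[\alpha^i_j]\cdot ID[t_i]=\pm\beta^i_j$, and the remaining ones zero, since a tree contributes no graph loops and there are no type $\delta$ generators --- are exactly the products of $H^*(\widehat{\Sigma}_i\times S^1;\bbq)$. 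The critical fibers never enter this ring; they show up only in how the boundary curve of $\Sigma_i$ is written in terms of $t_i$, a datum that I want the gluing maps of the connected sum to carry.

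Next I would run the induction on the number of Seifert fibered pieces. Delete a leaf piece $Y_n$, attached along a single plumbing torus $\tau$ (by $\pm J$) to a node of the tree graph manifold $M'$ that remains, and feed the decomposition of $M$ cut along $\tau$ into the cohomology Mayer--Vietoris sequence. The goal is to recognize the outcome as a connected sum: take the gluing maps $\epsilon$ to be the restrictions to $\tau$ --- it is the degree-one part of these restrictions that allows the construction to identify or to kill fiber classes --- and take the module $V$, with its maps $\iota$, to record the identifications that $\pm J$ forces (the regular fiber of $Y_n$ with a boundary curve on the adjacent node, and conversely) together with the usual identification of fundamental classes. These identifications are precisely the row of the connectivity matrix belonging to node $n$, so the ring produced by the connected sum is exactly the one Theorem~\ref{thm:IntersectionRing} assigns to $M$. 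Since $M'$, re-capped, is again a tree graph manifold with orientable bases on one fewer piece, the inductive hypothesis expresses $H^*(M';\bbq)$ as an iterated connected sum of the $H^*(\widehat{\Sigma}_i\times S^1;\bbq)$ for $i<n$, and associativity of the connected sum finishes the step; the base case is the local computation above.

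The step I expect to be the main obstacle is the bookkeeping that makes the torus gluing fit the formal connected-sum framework while keeping the products honest. One must check that the subobject quotiented out in Definition~\ref{def:ConnectedSum} is an honest ideal although part of the gluing data lives in degree one rather than in the socle; and one must match the coefficients --- the $n_i$ and $C_i$ of Theorem~\ref{thm:IntersectionRing} --- with those the connected sum produces, equivalently show that the choices made in constructing the type $t$ intersection duals in Section~\ref{section:TSurface} amount to choices of the maps $\epsilon_i$ and $\iota_i$ and that different admissible choices yield isomorphic rings. A closely related point: after several gluings certain regular fibers die in $H_1(M;\bbq)$, and one has to verify the connected sum kills exactly those --- that it reproduces the kernel of the connectivity matrix --- which is precisely what the restriction maps to the plumbing tori are arranged to accomplish.
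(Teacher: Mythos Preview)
Your route is genuinely different from the paper's. The paper does not give a formal proof at all: it writes ``The proof is a generalization of the following example'' and then works Example~\ref{ex:ConSum} in detail. The implicit method there is \emph{not} inductive. One computes the intersection ring of $M$ once and for all via Theorem~\ref{thm:IntersectionRing}; one then writes down, simultaneously for every piece, a map $\epsilon_i: H^*(\widehat\Sigma_i\times S^1)\to T=\bbq[F]/\langle F^2\rangle$ sending the base surface class to the multiple of $F$ dictated by the connectivity matrix; one forms the full fiber product over $T$; and one quotients by the relations equating the various $t_i$ (and the fundamental classes) according to the row-reduction of that matrix. The verification is then a direct comparison of two explicitly presented rings. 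No Mayer--Vietoris, no cutting, no re-capping.

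Your induction could in principle be made to work, but the two issues you flag at the end are not merely bookkeeping. First, ``$M'$, re-capped'' is not innocuous: filling the torus boundary of $M'$ changes the Seifert data at the adjacent node and hence the connectivity matrix, so the surviving regular fibers of the capped $M'$ need not match the ones you see inside $M$; you would have to argue that whatever discrepancy the capping introduces is exactly undone by the connected-sum gluing with $H^*(\widehat\Sigma_n\times S^1)$, and that the $\epsilon,\iota$ produced inductively for the capped $M'$ can be adjusted to the ones needed for $M$. Second, the Mayer--Vietoris sequence controls the groups but not the cup product directly, so the identification of the torus-gluing with the fiber-product-then-quotient of Definition~\ref{def:ConnectedSum} is itself the heart of the matter and cannot be taken for granted. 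The paper's all-at-once comparison sidesteps both problems by never cutting $M$: it just names two rings and an isomorphism. What your approach would buy, if carried through, is an honest proof where the paper offers only an example; but the direct comparison is the shorter path to that.
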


The proof is a generalization of the following example.

\begin{exam}\label{ex:ConSum}
	Let $M$ be the tree graph manifold with three Seifert fibered spaces over the genus one surface, each with one critical fiber; the critical fibers will be of type $-1/2$, $-3$, and $-1$. Both plumbings are by $+J$. The graph manifold is depicted schematically below.
	
	\parbox{.7\textwidth}{	\begin{itemize}
			\item $P$: genus $1$ with one critical fiber of type $-1/2$
			\item $Q$: genus $1$ with one critical fiber of type $-3/1$
			\item $R$: genus $1$ with one critical fiber of type $-1/1$
		\end{itemize}
		}\parbox{.2\textwidth}{
		\begin{center}
			\begin{overpic}{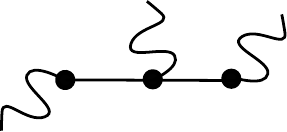}
				\put(85, 4){$R$}
				\put(50, 4){$Q$}
				\put(26, 22){$P$}
			\end{overpic}
		\end{center}
		}
	
	We will construct the intersection ring from the geometric point of view and then give the connected sum description of that ring.
	
	The connectivity matrix is \[ \bordermatrix{ &t_P& t_Q& t_R\cr
	                P & 1/2 & 1  & 0 \cr
	                Q & 1  &  3 & 1 \cr
	                R & 0  &   1 & 1  } \]
	and reduces to 				\[ \bordermatrix{ &t_P& t_Q& t_R\cr
					                P & 1/2 & 1  & 0 \cr
					                R+2P-Q & 0  &  0 & 0 \cr
					                R & 0  &   1 & 1  } \]
	
	We choose $t=t_R$ to generate the regular fiber subspace of $H_1$ and see $t_P = 2t$ and $t_Q = -t$. A closed immersed surface $S$ can be built from $R+2P-Q$, and $S$ is the intersection dual of $t$. There are curves $\alpha_P$ and $\beta_P$ on $P$ that are dual on $P$, and there are curves $\alpha_Q$, $\beta_Q$, $\alpha_R$ and $\beta_R$ with similar properties.  These curves have dual tori $A_P,B_P,A_Q,B_Q,A_R$ and $B_R$. Note that $A_P\cap B_P=t_P$ geometrically but $[A_P]\cdot[B_P] = 2t$ homologically.  The intersection $[A_Q]\cdot[B_Q]$ is $-t$, and $[A_R]\cdot[B_R] = t$. The class $[S]$ intersects the dual tori in the predictable ways.  Let $M_P$ denote the fundamental class of the $T^2 \times S^1$ labeled $P$.
	Let us construct this intersection ring algebraically. Let $H(P)$ denote the intersection ring of $T^2 \times S^1$ with generating curves $\alpha_P, \beta_P$, and $t_P$; $H(Q)$ and $H(R)$ are defined similarly.  In these rings, $P$, $Q$, and $R$ denote the tori.  Intuitively, the intersection ring of $M$ is obtained by $H(P)$, $H(Q)$, and $H(R)$ by setting $t_P$ and $t_Q$ equal to certain multiples of $t_R$, as we saw in the preceding paragraph.  Let $T$ be the truncated polynomial ring $T = \bbq\left[ F \right]/\langle F^2 \rangle$.  To construct the connected sum, we need to define ring homomorphisms $\epsilon_P:H(P) \to T$ and $\epsilon_Q$ and $\epsilon_R$.  We should think of element $F \in T$ as the surface $S$ in the geometric construction of $H_\ast(M)$.  Define $\epsilon_P$ by taking $1\mapsto 1$ and $P\mapsto \frac{1}{2}F$ and all other generators of $H(P)$ to zero; this is a ring homomorphism since $P$ is not expressible as a nontrivial product of elements in $H(P)$.  Define $\epsilon_Q$ by taking $1\mapsto 1$ and $Q\mapsto -F$ and others to zero. Define $\epsilon_R$ by taking $1\mapsto 1$ and $R\mapsto F$ and others to zero.
	The fiber product $P\times_T Q \times_T R$ consists of those triples in $P\times Q\times R$ with whose entries get sent to the same element of $T$ under the $\epsilon$ maps.  A generating set for this fiber product is $\{(1_P,1_Q,1_R),(A_P,0,0),(B_P,0,0),(0,A_Q,0),(0,B_Q,0),(0,0,A_R),(0,0,B_R), (2P,-Q,R), \\(\alpha_P,0,0),(\beta_P,0,0),(0,\alpha_Q,0),(0,\beta_Q,0), (0,0,\alpha_R),(0,0,\beta_R), (t_P,0,0),(0,t_Q,0),(0,0,t_R),\\(M_P,0,0),(0,M_Q,0), (0,0,M_R) \}$. At this point in the construction, we have built a ring with unit, the right number of `surfaces', but too many `curves' and `fundamental classes'. We must take the quotient that sets the fundamental classes equal and sets the right multiples of the $t$s equal.  Quotient $P\times_T Q\times_T R$ by the relations $(t_P,0,0) = (0,0,2t_R)$, $(0,t_Q,0) = (0,0,-t_R)$, $(M_P,0,0) = (0,M_Q,0)$, and $(0,M_Q,0) = (0,0,M_R)$.  The resulting quotient is the connected sum $P\#_T Q \#_T R$ and is isomorphic to $H_\ast(M)$; the isomorphism we have in mind sends, in particular, $(0,0,t_R)$ to $t$ and $(2P,-Q,R)$ to $S$, where $t$ and $S$ are defined in the geometric construction.

\end{exam}

\section{Applications: a manifold which is not homology cobordant to a tree graph manifold}\label{section:ExampleAwesome}

Not every manifold is homology cobordant to a tree graph manifold, as seen. 

\begin{thm}\label{thm:TreeGraphManifoldsNotGeneric}
	Not every closed, compact $3$-manifold is homology cobordant to a tree graph manifold.
\end{thm}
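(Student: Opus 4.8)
The plan is to obstruct homology cobordism using the rational cohomology ring, or equivalently the first Betti number $b_1$ together with the triple cup product form $\mu_M\in\Lambda^3 H^1(M;\bbq)^\ast$ given by $\mu_M(x,y,z)=\langle x\cup y\cup z,[M]\rangle$. For a closed oriented $3$-manifold these two data determine $H^\ast(M;\bbq)$ as a ring, since Poincar\'e duality identifies the cup pairing $H^1\times H^2\to\bbq$ canonically. First I would observe that the pair $(b_1,\mu_M)$ is a homology cobordism invariant: if $W$ is a homology cobordism from $M$ to $N$, then the inclusions $M\hookrightarrow W$ and $N\hookrightarrow W$ induce isomorphisms on $H_\ast(-;\bbq)$, hence (universal coefficients over a field, applied to the restriction ring maps) $H^\ast(M;\bbq)\cong H^\ast(W;\bbq)\cong H^\ast(N;\bbq)$ as rings. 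So it suffices to produce a single closed oriented $3$-manifold whose pair $(b_1,\mu)$ is realized by no tree graph manifold.

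Next I would determine which forms $\mu$ arise from tree graph manifolds, starting from the connected-sum description in Section~\ref{section:TreeManifolds}. For a tree graph manifold the Seifert pieces with nonorientable base contribute only a trivial ($S^1\times S^2$-type) ring summand: their type $\delta$ classes annihilate the intersection form (the $D$-row of the table in Theorem~\ref{thm:IntersectionRing}) and such ends carry no surviving regular fiber (Lemma~\ref{lem:n-o-torsion}). Unwinding the description one gets a normal form: after an isomorphism, $H^1=\bigoplus_i U_i\oplus V$ with $\dim V=r$ the number of surviving regular fibers, and $\mu=\sum_i\omega_i\wedge\xi_i$, where each $\omega_i$ is a nondegenerate alternating form on the pairwise independent, even-dimensional space $U_i$ of $\alpha,\beta$-classes on the $i$th orientable base $\Sigma_i$ (so $\omega_i\ne 0$ iff $g(\Sigma_i)\ge 1$), and each $\xi_i\in(H^1)^\ast$ vanishes on every $U_j$. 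Two consequences: (a) $\mu$ restricts to $0$ on $\bigoplus_i U_i$, a subspace of codimension $r=b_1-\sum_i 2g(\Sigma_i)-g_-$; and (b) the radical of $\mu$ contains $\bigl(\bigcap_i\ker\xi_i\bigr)\cap V$ together with the $g_-$-dimensional nonorientable part, so if $\mu$ is nondegenerate then $g_-=0$ and the $\xi_i$ span $V^\ast$, forcing $r\le\#\{i:\omega_i\ne 0\}$.

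Finally I would take as witness a closed oriented $3$-manifold $M^\ast$ with $H_1(M^\ast;\bbz)\cong\bbz^7$ whose triple cup product form is the split $G_2$ three-form $\varphi$ (the generic element of $\Lambda^3(\bbq^7)^\ast$); such a manifold exists by Sullivan's realization theorem for intersection rings of $3$-manifolds, e.g.\ by surgery on a suitable framed link in $\#^7(S^1\times S^2)$. The form $\varphi$ is nondegenerate, and the maximal subspaces on which it vanishes (the coassociative $4$-planes) have codimension $3$. So a tree graph manifold realizing $(7,\varphi)$ has, by (b), $g_-=0$ and the $\xi_i$ spanning the $3$-dimensional $V^\ast$, hence at least three nonzero $\omega_i$; but by (a) $r\ge 3$, so $\sum_i 2g(\Sigma_i)=7-r\le 4$, leaving room for at most two positive-genus orientable bases, i.e.\ at most two nonzero $\omega_i$ --- a contradiction. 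Hence $M^\ast$ is not homology cobordant to any tree graph manifold. The hard part is the middle paragraph: extending the Section~\ref{section:TreeManifolds} structure theorem to all tree graph manifolds and extracting the normal form, then certifying that $\varphi$ lies outside the resulting family. The latter is a linear-algebra problem about alternating $3$-forms that must be handled with care --- one has to rule out a large $V$ (many surviving fibers contributing only trivial summands) helping, and the family $\sum_i\omega_i\wedge\xi_i$ is more flexible than it first appears, since even star-shaped plumbing graphs produce forms in which the supports $U_i$ overlap after a change of basis. By contrast, realizing the target form $\varphi$ by an actual $3$-manifold is routine.
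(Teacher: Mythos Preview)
Your proposal is correct in outline and takes a genuinely different route from the paper. Both arguments rest on Theorem~\ref{thm:IntersectionRing} and Sullivan's realization theorem, but they diverge in the choice of witness and in the structural obstruction.

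\textbf{Comparison.} The paper works in $b_1=6$: it exhibits the explicit form $abc+aef+bde$, shows by hand that it has no element $x$ with $x\cdot y=0$ for all $y$ (no rank~$1$ summand), and then proves by a somewhat intricate linear-algebra argument (Propositions~\ref{prop:OppPara} and~\ref{prop:AllPara}) that it is not $GL_6(\bbq)$-equivalent to $uvw+xyz$ (no rank~$3$ summand). Separately, Theorem~\ref{thm:TreeSplitsOddly} shows that every tree graph manifold with $b_1=6$ must split a rank~$1$ or rank~$3$ summand, by a short case analysis on the number of surviving regular fibers. Your approach instead extracts from Theorem~\ref{thm:IntersectionRing} the normal form $\mu=\sum_i\omega_i\wedge\xi_i$ directly (which is exactly what the paper's table says once $\gamma$-classes are absent), and then kills the whole family by a clean numerical inequality: nondegeneracy forces $\#\{i:\omega_i\neq 0\}\ge r$, while the codimension of an isotropic subspace forces $2\cdot\#\{i:\omega_i\neq 0\}\le b_1-r$. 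Choosing $b_1=7$ and a generic $3$-form makes these incompatible. Your argument is more conceptual and avoids the paper's ad hoc coordinate computations; the paper's argument is entirely self-contained and elementary.

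\textbf{What you still owe.} Two points deserve sharpening. First, the assertion that a generic $3$-form on a $7$-dimensional space has no isotropic $5$-plane is true but not obvious; you should supply a proof or a reference. One clean route: for nonzero $x$ in an isotropic subspace $L$, the $2$-form $\iota_x\varphi$ vanishes on $L$, so $L$ is isotropic for $\iota_x\varphi$; but for a generic $\varphi$ and $x$ with $B_\varphi(x,x)\neq 0$ one has $\mathrm{rank}(\iota_x\varphi)=6$, whose maximal isotropic subspaces are $4$-dimensional, and the $B_\varphi$-null vectors cannot fill a $5$-plane since $B_\varphi$ has signature $(3,4)$ or $(4,3)$. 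Second, your decomposition $H^1=\bigoplus_iU_i\oplus V$ omits the $\delta$-summand $D$; you do note that $D$ lies in the radical, but claim~(a) should read that $\mu$ vanishes on $\bigoplus_iU_i\oplus D$, which has codimension exactly $r$. This is cosmetic since your witness has trivial radical, hence $g_-=0$, but it should be stated correctly.
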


The proof relies on the notion of a decomposition of the cohomology ring of a $3$-manifold.  See Definition~\ref{def:ConnectedSum} for the notion of composition.

\begin{defn}
	Let $U$ be the cohomology ring of a closed, orientable $3$-manifold. A \emph{decomposition} of $U$ corresponds to seeing $U$ as the cohomology ring of a connected sum of manifolds, i.e. $U = H^\ast(M\# N)$.\footnote{For our purposes, it is sufficient to consider the ring decomposition corresponding to the connected sum decomposition of $3$-manifolds. One could consider more general decompositions as suggested by Definition~\ref{def:ConnectedSum}, but the simpler notion is strong enough for us.} The ring $U$ is \emph{indecomposable} if $U=H^\ast(M\# N)$ implies that $M$ or $N$ is a homology $3$-sphere. If $U=H^*(M\# N)$ and $\rank\left(H_2(M) \right) = r$, we say \emph{$U$ splits a rank $r$ summand}.
\end{defn}

For example, if $U$ splits a rank $1$ summand, then $U = H^\ast\left( S^1\times S^2 \# N\right)$ for some $3$-manifold $N$. Note that the total rank of $H^\ast\left( S^1 \times S^2\right)$ is $4$, but the rank of $H_2(S^1 \times S^2)$ is $1$.  Note that \emph{summand} means (just the $H_2$ part of the) \emph{connected summand}, not \emph{direct summand}.

	
	

Before presenting an example of a 3-manifold that is not homology cobordant to any tree graph manifold, we recall a result of D. Sullivan \cite{DSullivan:IntersectionRing}: for any finitely generated free abelian group $H$ and any $\omega \in \bigwedge^3 H$, there is a closed, compact 3-manifold $M$ whose cohomology ring corresponds to $\omega$.  We briefly recount the correspondence between $\bigwedge^3 H$ and the cohomology rings of 3-manifolds with $H_2 \cong H$. Let $M$ be a closed, compact 3-manifold. Any three surfaces $A,B,C$ in $M$ will intersect generically in finitely many points; this `triple intersection' is skew symmetric. Given a basis for $H \cong H_2(M)$ and any three elements $\alpha, \beta, \gamma$ in that basis, represent the elements by embedded surfaces $A$, $B$, and $C$. Suppose there are $m$ (signed) intersection points between $A$, $B$, and $C$. The three-form corresponding to $H^*(M)$ will have $m\,\alpha\wedge\beta\wedge\gamma$.  As a quick example, zero-surgery on the Borromean rings (or $T^3$) has the form $\alpha\wedge\beta\wedge\gamma$, and $\Sigma_2 \times S^1$ has the form $\alpha_1\wedge\beta_1\wedge t + \alpha_2\wedge\beta_2\wedge t$. Note that one can glean the intersection product from this $3$-form. In $\Sigma_2 \times S^1$, we see $[\alpha_1]\cdot[\beta_1] = ID[t]$, and in $T^3$, we have $[\alpha]\cdot[\gamma] = -ID[\beta]$.

\begin{exam}\label{ex:Awesome}
	Let $M$ denote a 3-manifold with $H_2(M) \cong \mathbb{Q}^6$ with fixed basis $\{a,b,c,d,e,f\}$ and cohomology ring corresponding to the 3-form $abc + aef + bde$.  Such an $M$ exists by work of D. Sullivan \cite{DSullivan:IntersectionRing}, and an explicit example is zero-surgery on the link in Figure~\ref{fig:ExampleAwesome}.
	
	\begin{figure}[!ht]
		\begin{center}
			\begin{overpic}{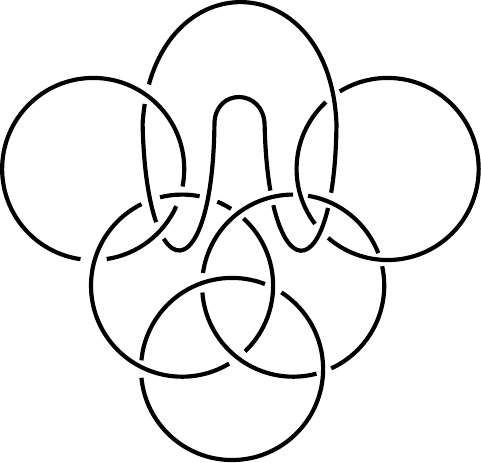}
				\put(18,20){$a$}
				\put(-5, 50){$f$}
				\put(48, 78){$e$}
				\put(100, 50){$d$}
				\put(75, 18){$b$}
				\put(48, 4){$c$}
			\end{overpic}
			\caption{$M$ is zero-surgery on this link and has the $3$-form $abc + aef + bde$}
			\label{fig:ExampleAwesome}
		\end{center}
	\end{figure}
	
	We claim that $H^\ast(M)$ splits off neither a $\bbq$ nor a $\bbq^3$ summand, i.e., up to cohomology $M$ does not split off an $S^1 \times S^2$ or an $S^1 \times S^1 \times S^1$.
	
	If $H^\ast(M)$ split a $\bbq$ summand, there would be a nonzero $x\in H_2(M)$ with $x\cdot y=0$ for all $y\in H_2(M)$.  A lower case $y$ denotes an element of $H_2(M)$, and a capital $Y$ denotes its intersection dual in $H_1(M)$.  Suppose $x\cdot y=0$ for all $y\in H_2(M)$. Write $x = n_1\,a+ n_2\,b + n_3\,c + n_4\,d + n_5\,e + n_6\,f$. Taking $y=c$, we compute $x\cdot c = -n_1\,B + n_2\,A$ and conclude $n_1=n_2=0$. Taking $y=f$, we compute $x\cdot f = n_5\,A$ and conclude $n_5=0$. Taking $y=e$ we compute $x \cdot e = -n_6\,A+ n_4\,B$ and conclude $n_4=n_6=0$. Taking $y=b$, we compute $x \cdot b = -n_3\,A$ and conclude $n_3=0$.
	
	Thus $x=0$, and $H^\ast(M)$ splits no $\bbq$ summand.
	
	That $H^\ast(M)$ splits no $\bbq^3$ summand is left to Theorem~\ref{thm:splitsnoz3}.

\end{exam}

\begin{thm}\label{thm:splitsnoz3}
	The form $\omega = abc + aef + bde$ is not equivalent, up to a change of basis of $H_2(M)$, to the form $\omega' = uvw + xyz$.
\end{thm}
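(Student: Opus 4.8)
My plan is to attach to any alternating $3$-form on $H_2(M;\bbq)$ a projective subvariety that is visibly preserved under change of basis, compute it for both $\omega$ and $\omega'$, and observe that the two are not projectively equivalent. Concretely: for a class $v\in H_2:=H_2(M;\bbq)\cong\bbq^6$, let $\sigma_v$ be the alternating bilinear form on $H_2$ given by the triple intersection number $\sigma_v(y,z)=v\cdot y\cdot z$ (i.e.\ the contraction $\iota_v\omega$). Since $\sigma_v(v,z)=v\cdot v\cdot z=0$, the class $v$ lies in the radical of $\sigma_v$, so $\rank(\sigma_v)\in\{0,2,4\}$. I then set
\[ \mathcal D_\omega=\bigl\{[v]\in\mathbb P(H_2):\rank(\sigma_v)\le 2\bigr\}. \]
The condition $\rank(\sigma_v)\le 2$ is the simultaneous vanishing of the $4\times4$ Pfaffian minors of the matrix of $\sigma_v$, which are quadratic in $v$, so $\mathcal D_\omega$ is a projective subvariety of $\mathbb P(H_2)\cong\mathbb P^5$; a change of basis taking $\omega$ to $\omega'$ takes $\mathcal D_\omega$ onto $\mathcal D_{\omega'}$ via the induced projective-linear map. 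Hence it suffices to show $\mathcal D_\omega$ and $\mathcal D_{\omega'}$ are not projectively equivalent.

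Next I would compute $\mathcal D_{\omega'}$ for $\omega'=uvw+xyz$. Write $H_2=U\oplus X$ with $U=\langle u,v,w\rangle$ and $X=\langle x,y,z\rangle$. For $\xi\in U$ and any $y\in X$ one has $\omega'(\xi,y,z)=0$ (the $uvw$ summand vanishes because $y$ has no $u,v,w$-coordinate, the $xyz$ summand because $\xi$ has no $x,y,z$-coordinate), so $\sigma_\xi$ factors through the $3$-dimensional quotient $H_2/X$ and therefore has rank $\le 2$. Thus $\mathbb P(U)\subseteq\mathcal D_{\omega'}$, and symmetrically $\mathbb P(X)\subseteq\mathcal D_{\omega'}$; since $U+X=H_2$, the variety $\mathcal D_{\omega'}$ lies in no hyperplane of $\mathbb P^5$.

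For $\omega=abc+aef+bde$ and $v=v_aa+\cdots+v_ff$, I would write out the matrix of $\sigma_v$ in the basis $a,\dots,f$: its nonzero entries are $\sigma_v(a,b)=v_c$, $\sigma_v(a,c)=-v_b$, $\sigma_v(a,e)=v_f$, $\sigma_v(a,f)=-v_e$, $\sigma_v(b,c)=v_a$, $\sigma_v(b,d)=v_e$, $\sigma_v(b,e)=-v_d$, $\sigma_v(d,e)=v_b$, $\sigma_v(e,f)=v_a$. The $4\times4$ Pfaffians on the index sets $\{b,c,e,f\}$, $\{a,c,d,e\}$, $\{a,b,d,f\}$ are then $\pm v_a^2$, $\pm v_b^2$, $\pm v_e^2$, so $[v]\in\mathcal D_\omega$ forces $v_a=v_b=v_e=0$; hence $\mathcal D_\omega\subseteq\mathbb P\langle c,d,f\rangle$, a plane. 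Since a subvariety of $\mathbb P^5$ contained in a $2$-plane cannot be carried by a projective-linear isomorphism to one contained in no hyperplane, $\omega$ and $\omega'$ are inequivalent, which is the theorem.

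The main obstacle is merely the bookkeeping in the second paragraph's vanishing claim and the explicit Pfaffian evaluation in the third — neither is deep, but the signs must be handled with care. Everything else (that $\mathcal D_\omega$ is a well-defined projective invariant, and the hyperplane dichotomy) is formal. As a sanity check one may finish the computation to see that $\mathcal D_\omega$ is exactly the plane $\mathbb P\langle c,d,f\rangle$ and $\mathcal D_{\omega'}$ is exactly the disjoint union $\mathbb P(U)\sqcup\mathbb P(X)$ of two planes, so the two loci are distinguished even by connectedness; but the containments above already suffice.
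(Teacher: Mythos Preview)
Your argument is correct and takes a genuinely different route from the paper's own proof. The paper proceeds by direct contradiction: assuming a change of basis $N=(n^i_j)$ carries $\omega$ to $\omega'$, it expands the nine vanishing products $u\cdot x=u\cdot y=\cdots=w\cdot z=0$ in the $\{A,\ldots,F\}$ basis and reads off, from the $C$-, $D$-, and $F$-coefficients, that the column $2$-vectors $(n^i_p,n^i_q)$ are pairwise parallel for $(p,q)\in\{(1,2),(2,5),(1,5)\}$ whenever $i,j$ lie on opposite sides of the partition $\{1,2,3\}\cup\{4,5,6\}$. A weak-transitivity step then forces one of these three pairs to give parallelism for \emph{all} $i,j$, so one of $C,D,F$ never appears in any product among $u,\ldots,z$, contradicting that those products span $H_1$. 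Your approach instead packages this information into the rank-drop locus $\mathcal D_\omega=\{[v]:\rank(\iota_v\omega)\le 2\}$, an invariant projective variety, and distinguishes the two forms by the linear span of that locus: for $\omega'$ it contains the two complementary planes $\mathbb P(U)$ and $\mathbb P(X)$ and hence spans $\mathbb P^5$, while for $\omega$ the three Pfaffians $\pm v_a^2,\pm v_b^2,\pm v_e^2$ confine it to the single plane $\mathbb P\langle c,d,f\rangle$.

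What each buys: the paper's computation is elementary and self-contained, and it makes visible exactly which $H_1$-generator goes missing; your argument is shorter, coordinate-free at the level of the invariant, and transports immediately to other pairs of trilinear forms. It is also pleasant that the two proofs are detecting dual facets of the same degeneracy: the coordinates $a,b,e$ your Pfaffians single out are precisely the complement of the $c,d,f$ whose duals the paper shows cannot all lie in the image of the product. Your bookkeeping on the matrix of $\sigma_v$ and the three chosen $4\times4$ Pfaffians checks out, and the span comparison at the end is the right way to finish.
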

\begin{proof}
	The form $\omega$ describes the multiplication $H_2 \times H_2 \to H_1$ in the graded cohomology ring $H^\ast(M)$. An invertible map $N$ that changes the basis of $H_2(M)$ acts on the form $\omega \mapsto \omega^N$. Suppose for the sake of contradiction that there is an element $N\in GL(\bbq^6)$ taking the basis $\{a,b,c,d,e,f\}$ to the basis $\{u,v,w,x,y,z\}$ for $H_2(M)\cong \bbq^6$ so that $\omega^N = uvw + xyz$. The form $\omega^N$ describes the cohomology ring of $S^1\times S^1 \times S^1 \# S^1\times S^1 \times S^1$; this is the only ring of a $3$-manifold that splits a $\bbq^3$ summand but not a $\bbq$ summand.
	
	Let $\{A,B,C,D,E,F\}$ be the dual (to $\{a,b,c,d,e,f\}$) basis for $H_1(M)$. From the form $\omega$, we can write the multiplication table:
	
	\[\omega \equiv \begin{tabular}{c|c|c|c|c|c|c|c|c|c|c|c|c|c|c}
 		$a\cdot b$ & $a\cdot c$ & $a\cdot d$ & $a\cdot e$ & $a\cdot f$ & $b\cdot c$ & $b\cdot d$ & $b\cdot e$ & $b\cdot f$ & $c\cdot d$ & $c\cdot e$ & $c\cdot f$ & $d\cdot e$ & $d\cdot f$ & $e\cdot f$ \\ \hline
		 $C$ & $-B$ & $0$ & $F$ & $-E$ & $A$ & $E$ & $-D$ & $0$ & $0$ &  $0$ & $0$ & $B$ & $0$ & $A$
	\end{tabular}\]
	
	Write 
	\begin{eqnarray*}
		u &=& n^1_1\,a + n^1_2\,b + n^1_3\,c + n^1_4\,d + n^1_5\,e + n^1_6\,f \\
		v &=& \cdots\\
		w &=& \cdots\\
		x &=& \cdots\\
		y &=& \cdots\\
		z &=& n^6_1\,a + n^6_2\,b + n^6_3\,c + n^6_4\,d + n^6_5\,e + n^6_6\,f \\
	\end{eqnarray*}
	
	Let $\{U,V,W,X,Y,Z\}$ be the dual (to $\{x,y,z,x,y,z\}$) basis for $H_1(M)$. By hypothesis, the form $\omega^N$ gives the multiplication table:
	\[\omega^N \equiv \begin{tabular}{c|c|c|c|c|c|c|c|c|c|c|c|c|c|c}
 		$u\cdot v$ & $u\cdot w$ & $u\cdot x$ & $u\cdot y$ & $u\cdot z$ & $v\cdot w$ & $v\cdot x$ & $v\cdot y$ & $v\cdot z$ & $w\cdot x$ & $w\cdot y$ & $w\cdot z$ & $x\cdot y$ & $x\cdot z$ & $y\cdot z$ \\ \hline
		 $W$ & $-V$ & $0$ & $0$ & $0$ & $U$ & $0$ & $0$ & $0$ & $0$ &  $0$ & $0$ & $Z$ & $-Y$ & $X$
	\end{tabular}\]
	
	Using $N$ and the table for $\omega$, one computes
	\begin{eqnarray}\label{eq:product}
		u\cdot x = \left( n^1_2n^4_3 - n^1_3n^4_2 + n^1_5n^4_6 - n^1_6n^4_5 \right)\,A &+& \left( - n^1_1n^4_3 + n^1_3n^4_1 + n^1_4n^4_5 - n^1_5n^4_4 \right)\,B \\ + \left( n^1_1n^4_2 - n^1_2n^4_1 \right)\,C  &+& \left( - n^1_2n^4_5 + n^1_5n^4_2 \right)\,D \notag \\ + \left( - n^1_1n^4_6 + n^1_6n^4_1  + n^1_2n^4_4 - n^1_4n^4_2 \right)\,E &+& \left( n^1_1n^4_5 - n^1_5n^4_1 \right)\,F \notag 
	\end{eqnarray}
	
	One may use equation~(\ref{eq:product}) to calculate the other products; for example $v\cdot z$ may be gotten from equation~(\ref{eq:product}) by changing the superscripts to $2$ and $6$ instead of $1$ and $4$.
	
	The goal is to show that the tables for $\omega$ and $\omega^N$ are irreconcilable.
	
	\begin{prop}\label{prop:OppPara}
		For $i\in\{1,2,3\}$ and $j\in\{4,5,6\}$, we have parallel vectors of $\bbq^2$: \[ \begin{pmatrix} n^i_1 \\ n^i_2 \end{pmatrix} \Big\| \begin{pmatrix} n^j_1 \\ n^j_2 \end{pmatrix},\ \ \begin{pmatrix} n^i_2 \\ n^i_5 \end{pmatrix} \Big\| \begin{pmatrix} n^j_2 \\ n^j_5 \end{pmatrix},\ \ \begin{pmatrix} n^i_1 \\ n^i_5 \end{pmatrix} \Big\| \begin{pmatrix} n^j_1 \\ n^j_5 \end{pmatrix}\]
	\end{prop}
	\begin{proof}
		Equation~(\ref{eq:product}) gives an expansion for the product $u\cdot x$. By the multiplication table for $\omega^N$, $u\cdot x = 0$. Thus, the coefficients in front of $A,B,C,D,E$, and $F$ must be zero.  In particular, the coefficients in front of $C,D$, and $F$ are determinants of certain $2\times 2$ matrices. This concludes the proof for $(i,j)=(1,4)$. The other cases follow from similar arguments since $u\cdot y = u\cdot z = v\cdot x = v\cdot y = v\cdot z = w\cdot x = w\cdot y = w\cdot z = 0$.
	\end{proof}
	
	\begin{prop}\label{prop:AllPara}
		There is a (non-ordered) pair $(p,q)\in\{(1,2),(2,5),(1,5)\}$ such that \[ \begin{pmatrix} n^i_p \\ n^i_q \end{pmatrix} \Big\| \begin{pmatrix} n^j_p \\ n^j_q \end{pmatrix}\] for all $i,j\in\{1,2,3,4,5,6\}$.
	\end{prop}
	\begin{proof}
		We start by showing there is an $\ell\in\{1,2,3\}$, an $m\in\{4,5,6\}$, and a $k_\ell$ and $k_m\in\{1,2,5\}$ such that \[ n_{k_\ell}^\ell \neq 0 \neq n_{k_m}^m \]
		
		Suppose that $n_{k_\ell}^\ell=0$ for all $\ell\in\{1,2,3\}$ and $k_\ell\in\{1,2,5\}$.  We have assumed $u\cdot v = W$, but expanding $u\cdot v$ as in equation~(\ref{eq:product}) yields $u\cdot v=0$ (note that each product of $n$'s in equation~(\ref{eq:product}) has a $n_1$ or $n_2$ or $n_5$). By contradiction, there is an $\ell$ and a $k_\ell$ as required. A similar argument will produce an $m$ and a $k_m$ as desired.
		
		The reader will recall that parallelism is not an equivalence relation on the set of vectors in $\bbq^2$. However, one does have that $\vec{\alpha}\|\vec{\beta}$ and $\vec{\alpha}\|\vec{\gamma}$ implies $\vec{\beta}\|\vec{\gamma}$ as long as $\vec{\alpha}\neq \vec{0}$.  We will call this `weak transitivity of parallelism.'
		
		Set $p=k_\ell$ and pick $q=k_m$ unless $k_m=k_\ell$ in which case choose a $q\in\{1,2,5\}\setminus\{p\}$.  In either case we have \[ \begin{pmatrix} n^\ell_p \\ n^\ell_q \end{pmatrix} \neq \vec{0}\neq  \begin{pmatrix} n^m_p \\ n^m_q \end{pmatrix}\] Now we may use weak transitivity of parallelism with Proposition~\ref{prop:OppPara} to conclude the proof of the proposition.
	\end{proof}
	
	Returning to the proof of Theorem~\ref{thm:splitsnoz3}, we have by Proposition~\ref{prop:AllPara} a $(p,q)\in\{(1,2),(2,5),(1,5)\}$ with $n^i_pn^j_q - n^i_qn^j_p=0$ for all $i,j\in\{1,2,3,4,5,6\}$. By the expansion in equation~(\ref{eq:product}), there exists one of $B,C$, and $F$ that never appears in any of the products among the $u,v,w,x,y,z$. Since the duals $U,V,W,X,Y,Z$ are all among these products, we conclude that $\mathrm{span}\{U,V,W,X,Y,Z\} \subsetneq \mathrm{span}\{A,B,C,D,E,F\}$ which contradicts that $\{U,V,W,X,Y,Z\}$ is a basis for $H_1(M)\cong \bbq^6$. This contradicts the existence of $N$ and concludes the proof of Theorem~\ref{thm:splitsnoz3}.
	
\end{proof}

\begin{thm}\label{thm:TreeSplitsOddly}
	Let $M$ be a tree graph manifold with $H_1(M)$ of rank 6. Then $H^*(M)$ has an indecomposable summand of rank 1 or of rank 3.
\end{thm}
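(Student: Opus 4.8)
The plan is to pass to the alternating trilinear form $\omega_M$ on $H_2(M;\bbq)\cong\bbq^6$ recording the intersection product (the ``$3$-form'' of the discussion preceding Example~\ref{ex:Awesome}) and to exploit the very restricted shape that Theorem~\ref{thm:IntersectionRing} forces on $\omega_M$ when the underlying graph is a tree. Two reductions streamline the bookkeeping. First, exactly as in Example~\ref{ex:Awesome}, $H^\ast(M)$ \emph{splits a rank-$1$ summand} iff there is a nonzero $x\in H_2(M;\bbq)$ with $x\cdot y=0$ for all $y\in H_2(M;\bbq)$; the split-off factor $Q$ then has $H_2(Q;\bbq)\cong\bbq$ with trivial cup products, so $H^\ast(Q;\bbq)$ is indecomposable of rank $1$ (ranks $\ge 1$ cannot sum to $1$). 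Second, if $H^\ast(M)$ splits a rank-$3$ summand with factor $P$, then $\omega_P\in\bigwedge^3\bbq^3\cong\bbq$, so $\omega_P$ is either $0$ --- making $H^\ast(P;\bbq)$ the trivial ring on rank $3$, a connected sum of three rank-$1$ indecomposables --- or, after rescaling one basis vector, the $3$-form of $T^3$, which is indecomposable of rank $3$ (a nonzero decomposable $3$-form $e_1\wedge e_2\wedge e_3$ cannot lie in $\bigwedge^3$ of a proper subspace). Hence it suffices to show that $H^\ast(M)$ splits a rank-$1$ or a rank-$3$ summand.

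I would first dispatch the case in which $H_1(M;\bbq)$ has a generator of type $\delta$: its intersection dual is a Klein bottle in a Seifert end over a nonorientable base (Section~\ref{sec:klein}), and the $D$ row and column of the table in Theorem~\ref{thm:IntersectionRing} vanish identically, so this dual is a nonzero element of the radical of the intersection product and $H^\ast(M)$ splits a rank-$1$ summand. Otherwise there is no type-$\delta$ generator (any $\bbr P^2$ base ends then contribute neither generators nor relations over $\bbq$ and may be ignored), and since the graph is a tree there is no type-$\gamma$ generator either. By Theorem~\ref{thm:homologyofgraphmanifold} we may then write $H_2(M;\bbq)=V\oplus\Lambda$, where $V=\mathrm{span}\{ID[\alpha_j^{(i)}],ID[\beta_j^{(i)}]\}$ ranges over the orientable bases $\Sigma_1,\dots,\Sigma_n$ and $\Lambda=\mathrm{span}\{F^{(1)},\dots,F^{(r)}\}$ is spanned by the duals of the surviving regular fibers, with $\dim V=2g_+$, $\dim\Lambda=r$, and $2g_+ + r=6$ (so $r$ is even). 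Reading Theorem~\ref{thm:IntersectionRing} in this setting --- there is no $C$, and $T\cdot T=\gamma\mathrm{s}=0$ --- the nonzero products among these basis vectors are $ID[\alpha_j^{(i)}]\cdot ID[\beta_{j'}^{(i)}]=\delta_{jj'}[t_i]$, where $[t_i]=\sum_k c_{ik}[t^{(k)}]$ with the $c_{ik}$ read off the reduced connectivity matrix, together with $ID[\beta_j^{(i)}]\cdot F^{(k)}=c_{ik}\,\alpha_j^{(i)}$ and $ID[\alpha_j^{(i)}]\cdot F^{(k)}=-c_{ik}\,\beta_j^{(i)}$; equivalently $\omega_M=\sum_{i=1}^n\sigma_i\wedge\tau_i$ with $\sigma_i:=\sum_{j=1}^{g(\Sigma_i)}ID[\alpha_j^{(i)}]\wedge ID[\beta_j^{(i)}]$ and $\tau_i:=\sum_k c_{ik}F^{(k)}\in\Lambda$. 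Put $\Lambda_0:=\mathrm{span}\{\tau_i:g(\Sigma_i)\ge 1\}\subseteq\Lambda$; since each positive-genus base contributes at least $1$ to $g_+$, we have $\dim\Lambda_0\le g_+$.

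A short case analysis finishes the proof. If $\dim\Lambda_0=r=0$ then all products vanish, the ring is trivial, and it splits a rank-$1$ summand. If $\dim\Lambda_0<r$, choose a nonzero $x=\sum_k s_k F^{(k)}\in\Lambda$ with $\sum_k c_{ik}s_k=0$ for every $i$ with $g(\Sigma_i)\ge 1$ (possible since $\Lambda_0\ne\Lambda$); the product formulas then give $x\cdot y=0$ for every basis vector $y$, hence for all $y$, so $H^\ast(M)$ splits a rank-$1$ summand. Finally suppose $\dim\Lambda_0=r\ge 1$. Then no positive-genus base can have $\tau_i=0$ (else $\dim\Lambda_0\le g_+-1$, which with $r=6-2g_+$ forces $g_+\ge 3$ and hence $r\le 0$, a contradiction), so $\{\tau_i:g(\Sigma_i)\ge 1\}$ is a spanning set of $\Lambda\cong\bbq^r$ with at most $g_+$ members, whence $r\le g_+$; together with $r=6-2g_+\ge 1$ this squeezes $g_+=2$, $r=2$, exactly two bases $\Sigma_1,\Sigma_2$ of genus $1$, and $\tau_1,\tau_2$ linearly independent. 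Then $\{f_1,\dots,f_6\}:=\{ID[\alpha_1^{(1)}],ID[\beta_1^{(1)}],\tau_1,ID[\alpha_1^{(2)}],ID[\beta_1^{(2)}],\tau_2\}$ is a basis of $H_2(M;\bbq)$ in which $\omega_M=f_1\wedge f_2\wedge f_3 + f_4\wedge f_5\wedge f_6$, the $3$-form of $T^3\#T^3$ (cf.\ Theorem~\ref{thm:splitsnoz3}); hence $H^\ast(M)$ splits a rank-$3$ summand.

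I expect the crux to be the normal form for the intersection product in the second paragraph: converting the entries of the table in Theorem~\ref{thm:IntersectionRing} into the explicit product formulas above (especially pinning down the signs the table suppresses), and verifying that the passive $\bbr P^2$ ends and the redundancy among the regular fibers neither obstruct the splitting $H_2=V\oplus\Lambda$ nor produce products outside the listed ones. Once that is in hand, the remaining case analysis is elementary linear algebra.
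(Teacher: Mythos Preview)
Your proof is correct and follows essentially the same approach as the paper: both dispose of the $\delta$ generators first, then use the special shape of the intersection product on a tree (no $\gamma$'s, so $T\cdot T=0$) from Theorem~\ref{thm:IntersectionRing} to write $\omega_M=\sum_i\sigma_i\wedge\tau_i$, and finish by a case analysis on the linear independence of the $\tau_i$'s inside the span of the surviving regular fibers. Your packaging via the subspace $\Lambda_0=\mathrm{span}\{\tau_i\}$ is a bit more streamlined than the paper's explicit split into the cases $r=0$, $r\ge 4$, $r=2$ (with three subcases), but the content is the same.
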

\begin{proof}
	
	If there is a nonorientable base in $M$ with $g<-1$, then $H^\ast(M)$ will have a rank 1 summand.  Henceforth, assume all bases in $M$ have $g\geq -1$. By Section~\ref{section:cohomology} there is a basis for $H_1(M)$ consisting of elements of three types: $\alpha$, $\beta$, and $t$.
	
	If there are no elements of type $t$, then Theorem~\ref{thm:IntersectionRing} implies $H^\ast(M) \cong H^\ast\left(\#^6 S^1 \times S^2 \right)$, which splits six rank 1 summands. There must be an even number of $t$ elements as the $\alpha$'s and $\beta$'s come in pairs.
	
	If there are more than two $t$'s, there is at most one pair of $\alpha$ and $\beta$, so their image cannot contain all the $t$'s; thus, there must be a rank 1 summand generated by one of the $t$'s.  This follows from Theorem~\ref{thm:IntersectionRing}, since the span of the products among $\alpha$'s and $\beta$'s is bounded above by the number of $\alpha$-$\beta$ pairs.
	
	We may now assume that there are two $t$'s, i.e. $H_2(M)$ has a basis of the form $\{\alpha_1,\beta_1, \alpha_2,\beta_2, t_1, t_2\}$.  Here by lower case letters we mean elements in $H_2(M)$ which should be thought of as the intersection duals of the similarly denoted curves in the manifold.  Thus by Theorem~\ref{thm:IntersectionRing} the intersection form of $M$ is \[ k\alpha_1\beta_1 t_2 + \ell \alpha_1\beta_1 t_2 + m\alpha_2\beta_2 t_1 + n \alpha_2\beta_2 t_2 = \alpha_1\beta_1(kt_1 + \ell t_2) + \alpha_2\beta_2(mt_1 + nt_2)\] The reader will recall that this means that $\alpha_1$ intersects $\beta_1$ in the linear combination $kt_1^\ast + \ell t_2^\ast$ of the intersection duals of $t_1$ and $t_2$.  We will change the basis of $H_2(M)$ to $\{\alpha_1,\beta_1,\alpha_2,\beta_2,s_1,s_2\}$ by changing the dual basis in $H_1(M)$.
	
	If $kt_1^\ast + \ell t_2^\ast$ and $mt_2^\ast + nt_2^\ast$ are linearly independent, we set $s_1$ and $s_2$ to be the elements of $H_2(M)$ with $s_1^\ast = kt_1^\ast + \ell t_2^\ast$ and $s_2^\ast = mt_1^\ast + nt_2^\ast$; then the intersection form of $M$ is $\alpha_1\beta_1 s_1 + \alpha_2\beta_2 s_2$, which clearly is split into two rank 3 indecomposable summands.
	
	If $k=l=0$ or $m=n=0$, then a similar change of basis shows (at least) three rank 1 summands split off.
	
	We are left with the case that $kt_1 + \ell t_2$ is a multiple of $mt_1 + n t_2$, which is nonzero; in this case one may change the basis for $H_2(M)$ so that the intersection form is $(p\alpha_1\beta_1 + \alpha_2\beta_2)s_2$ (for some $p\in\bbq$), which has a rank 1 summand generated by $s_1$.

\end{proof}

\begin{proof}[Proof of Theorem~\ref{thm:TreeGraphManifoldsNotGeneric}]
	We exhibited in Example~\ref{ex:Awesome} a 3-manifold $M$ whose $H^\ast(M)$ splits no rank 1 summand. By Theorem~\ref{thm:splitsnoz3}, $M$ has no rank 3 indecomposable summand. By Theorem~\ref{thm:TreeSplitsOddly}, $M$ cannot be homology cobordant to a tree graph manifold.
\end{proof}

\bibliographystyle{amsalpha}
\bibliography{/Users/pdhorn/Documents/Research/PeterHornBib}

\end{document}